\newtheorem{lm}{Lemma}[section]
\newtheorem{tr}{Theorem}[section]
\newtheorem{df}{Definition}[section]
\newtheorem{pr}{Proposition}[section]
\newtheorem{qs}{Question}[section]
\begin{document}
\author{F. Marko}
\title{Symmetrizers for Schur superalgebras}
\address{The Pennsylvania State University, 76 University Drive, Hazleton,
PA 18202, USA}
\email{fxm13@psu.edu}
\date{}\begin{abstract} 
For the Schur superalgebra $S=S(m|n,r)$ over a ground field $K$ of characteristic zero, we define symmetrizers $T^{\lambda}[i:j]$ of the ordered pairs of tableaux $T_i, T_j$ of the shape $\lambda$ and 
show that the $K$-span $A_{\lambda,K}$ of all symmetrizers $T^{\lambda}[i:j]$ has a basis consisting of $T^{\lambda}[i:j]$ for $T_i,T_j$ semistandard. The $S$-superbimodule $A_{\lambda,K}$ is identified as 
$D_{\lambda}\otimes_K D^o_{\lambda}$, where $D_\lambda$ and $D^o_\lambda$ are left and right irreducible $S$-supermodules of the highest weight $\lambda$.

We define modified symmetrizers $T^{\lambda}\{i:j\}$ and show that their $\mathbb{Z}$-span form a $\mathbb{Z}$-form $A_{\lambda,\mathbb{Z}}$ of $A_{\lambda, \mathbb{Q}}$. We show that every modified symmetrizer $T^\lambda\{i:j\}$ is a $\mathbb{Z}$-linear combination of symmetrizers $T^\lambda\{i:j\}$ for $T_i, T_j$ semistandard. Using modular reduction to a field $K$ of characteristic $p>2$, we obtain that $A_{\lambda,K}$ has a basis consisting of modified symmetrizers $T^\lambda\{i:j\}$ for $T_i, T_j$ semistandard.
\end{abstract}
\maketitle

\section{Introduction}

Let us start by describing the classical case of Schur algebras $S(m,r)$ over a field of an arbitrary characteristic.
The degree $r$ part $A(m,r)$ of the bialgebra of polynomials $A(m)$ has a filtration by $S(m,r)$-bimodules $A_{\leq \lambda}$, 
where $\lambda$ is a partition of $r$ of no more than $m$ parts.
The bimodule $A_{\leq \lambda}=\sum_{\zeta'\unlhd \lambda'} A_{\zeta}$, where $A_{\zeta}$ is the $K$-span of 
bideterminants $T^{\zeta}(i:j)$.
The sum $A_{<\lambda}=\sum_{\zeta'\lhd \lambda'} A_{\zeta}$ is also a bimodule, and the quotient 
$A_{\leq \lambda}/A_{<\lambda}$ is 
isomorphic to $\nabla(\lambda)\otimes\Delta(\lambda)^*$, 
where $\nabla(\lambda)$ is the left costandard module of the highest weight $\lambda$ (the Schur module)
and $\Delta(\lambda)^*$ is the right module which is the dual of the left module $\Delta(\lambda)$, the standard module of the highest weight $\lambda$ (the Weyl module).
The right module $\Delta(\lambda)^*$ is given as a span of bideterminants $T^{\lambda}(i:\ell)$, and the left module $\nabla(\lambda)$ is the span of bideterminants $T^{\lambda}(\ell:j)$, where $T^{\lambda}_\ell$ is the canonical tableau of the shape $\lambda$. For more details, consult \cite{gr,martin,doub,dkr,cl0,cl1,cl,brini}. 

The results for $S(m,r)$ translate readily to bimomodules over the general linear group $GL(m)$. Bimodules over
Schur algebras $S(m,r)$ are polynomial bimodules over $GL(m)$, and tensoring with a power of one-dimensional representation $Det$ yields the description of the Donkin-Koppinen filtration of the coordinate algebra $K[GL(m)]$.   
The factors of this filtrations are isomorphic to $H^0_{GL(m)}(\lambda)\otimes V_{GL(m)}(\lambda)^*$, where
$H^0_{GL(m)}(\lambda)$ is the left induced module of the highest weight $\lambda$ and $V_{GL(m)}(\lambda)$ is the left Weyl module of highest weight $\lambda$. For more details, see \cite{don,kop,lz2}.

In the case of characteristic zero, $S(m,r)$ and $GL(m)$ are semisimple. If the characteristic $p>0$, then $S(m,r)$ and 
$GL(m)$ are no longer semisimple. However, $S(m,r)$ is a quasi-hereditary algebra and the category of modules over $GL(m)$ is the highest weight category. For additional details, see \cite{martin, don, cps}.

In the characteristic zero case, the superalgebra $S=S(m|n,r)$ is semisimple, while $G=GL(m|n)$ is not. However, supermodules over $G$ form the highest weight category.  
In the case of positive characteristic $p>2$, modules over $G$ still form the highest weight category and there is Donkin-Koppinen filtration with superbimodule filtration factors isomorphic to 
$H^0_{G}(\lambda)\otimes V_G(\lambda)^*$, where $H^0_G(\lambda)$ and $V_G(\lambda)$ are the left induced and  Weyl supermodules of the highest weight $\lambda$, respectively. For more details, see \cite{kac,z,lz2}.

In the supercase, 
tensoring polynomial $S$-superbimodules with one-dimensional supermodule Berezian $Ber$ does not cover all rational $G$-superbimodules. Additionally, $S$ is no longer a quasi-hereditary algebra. Nevertheless, the $S$-superbimodule structure of $\nabla_S(\lambda)\otimes \Delta_S(\lambda)^*$ gives the $G$-supermodule structure 
of $H^0_G(\lambda)\otimes V_G(\lambda)^*$ in the case when $V_G(\lambda)$ and $H^0_G(\lambda)$ polynomial supermodules. According to \cite{m1}, this happens if and only if 
$\lambda_m\geq n$.

It was noted earlier that in the case of the Schur algebras, the bimodules $\nabla(\lambda)\otimes \Delta(\lambda)^*$ were described using bideterminants $T^{\lambda}(i:j)$. To transition to the superalgebra $S$, we define the symmetrizer of an ordered pair of tableaux $T_i$, $T_j$ of the
shape $\lambda$ by 
\begin{equation*}
T^{\lambda}[i:j]=\sum_{\rho\in R(T)} \sum_{\kappa\in C(T)} sgn(\kappa)
\chi_{i*\rho,j*\kappa}.
\end{equation*}
We can write $T^{\lambda}[i:j]$ equivalently as 
\begin{equation*}
\sum_{\rho\in R(T)} \sum_{\kappa\in C(T)}sgn(\kappa)
\chi_{i,j*\kappa*\rho}
\end{equation*}
or 
\begin{equation*}
\sum_{\kappa \in C(T)}\sum_{\rho\in R(T)} sgn(\sigma)
\chi_{i*\rho*\kappa,j}.
\end{equation*}

An equivalent form of the symmetrizers $T^{\lambda}[i:j]$ appeared first in \cite{doub} in connection to Gordan-Cappelli formula. In \cite{cl0}, the right symmetrizers $(S|\framebox{T})$ appear in the description of Schur modules for $GL(m)$,  and the left symmetrizers, denoted $(\framebox{S}|T)$, appear in the description of Weyl modules for $GL(m)$ over the ground field of arbitrary characteristic.
Later on, in the context of $GL(m|n)$, the symmetrizers $(\framebox{S}|T)$ are used in \cite{br} to describe covariant modules over the ground field of characteristic zero.

If we replace our definition of the symmetrizers with the dual definition 
\[\sum_{\kappa\in C(T)} \sum_{\rho\in R(T)} sgn(\kappa)
\chi_{i*\kappa,j*\rho},\]
then all results we obtain will naturally extend for this dual definition. We leave it to the reader to reformulate all results.

Denote by $D_{\lambda}$ the irreducible left $S$-module of the highest
weight $\lambda$ and by $D_{\lambda}^o$ the irreducible right $S$-module of
the highest weight $\lambda$, and denote by $A_{\lambda}=A_{\lambda,K}$ the $K$-span of all symmetrizers $%
T^{\lambda}[i:j]$ of the shape $\lambda$.

Assuming the characteristic of the ground field $K$ is zero and $\lambda$ is a $(m|n)$-hook partition, we show in Theorem \ref{t4.1} that symmetrizers
$T^{\lambda}[i:j]$, where $T_i, T_j$ are semistandard tableaux, form a basis of $A_{\lambda}$, and 
$A_{\lambda}\simeq D_{\lambda}\otimes D^o_{\lambda}$ as $S$-superbimodules.

In the second half of the paper, we define modified symmetrizers $T^{\lambda}\{i:j\}$ and in Theorem \ref{t5.1} we show that their $\mathbb{Z}$-span form a $\mathbb{Z}$-form
$A_{\lambda,\mathbb{Z}}$ of $A_{\lambda, \mathbb{Q}}$. 
In Theorem \ref{t5.2} we show that every modified symmetrizer $T^{\lambda}\{i:j\}$ is a $\mathbb{Z}$-linear combination of modified symmetrizers $T^{\lambda}\{i:j\}$ for $T_i, T_j$ semistandard.
Using a process of modular reduction, we obtain that over a field of characteristic $p>2$, the superbimodule $A_{\lambda, K}$ has a basis consisting of modified symmetrizers $T^\lambda \{i:j\}$. 
This result is related to our previous work \cite{mz}. 

Finally, we note that the isomorphism $A_{\lambda,K} \simeq \nabla_S(\lambda)\otimes \Delta_S(\lambda)^*$ is not true for all $(m|n)$-hook weight $\lambda$. However, if such isomorphism is valid for a hook weight $\lambda$ such that 
$\lambda_m\geq n$, then it describes the $G$-superbimodule structure of the factor of Donkin-Koppinen fitration corresponding to such $\lambda$.

\section{Preliminaries}
We mostly adhere to notation from \cite{mu}. 
A $K$-space $V$ is a superspace when it is $\mathbb{Z}_2$-graded, where $V=V_0\oplus V_1$ and the parity $|v_0|=0$ is even for $v_0\in V_0$, and $|v_1|=1$ is odd for $v_1\in V_1$. 
We define the parity of symbols $1\leq i\leq m$ to be even, and the parity of symbols $m+1\leq j\leq m+n$ to be odd.

\subsection{Tableaux}

Throughout the paper, we asssume that $\lambda=(\lambda_1, \ldots, \lambda_k)$ is a $(m|n)$-hook partition of $r$, which means $\lambda_{m+1}\leq n$. 
According to \cite{br}, $(m,n)$-hook partititions $\lambda$ are in one-to-one correspondence with irreducible modules over the Schur superalgebra $S(m|n,r)$. Denote by $\unlhd$ the dominance order on partitions of $r$.

Denote by $\Sigma_r$ the permutation group on $r$ elements.
Fix a basic tableau $T^{\lambda}=T$ of the shape $\lambda$, that is a bijection from the diagram $[\lambda]$ to the set $\{1, \ldots, r\}$,  and denote by $C(T)$ and $R(T)$, respectively, the subgroup of $\Sigma_r$
that permutes columns of $T$, and rows of $T$, respectively. 

Unless otherwise stated, we assume that entries of tableaux belong to the alphabet $\{1, \ldots, m+n\}$, where the symbols $1, \ldots, m$ are even, and symbols $m+1, \ldots, m+n$ are odd.  (We will extend this alphabet in \ref{4.1}.) Under this assumption, each tableau is given as $T_i$
for a uniquely defined multiindex $i$ of length $r$.

A tableau $T_i$ is called semistandard if its entries are weakly increasing in each column top to bottom, and in each row left to right. Additionally, we require that no even symbol is repeated in any collumn, and no odd symbol is repeated in any row of $T_i$. It is obvious that semistandard tableaux exist only for partitions $\lambda$ that are $(m|n)$-hook partitions.

For a tableau $T_i$, denote by $c_{qp}(i)$ the number of occurences of symbols not exceeding $p$ in the first $q$ columns
of $T_i$, and by $r_{qp}(i)$ the number of occurences of symbols not exceeding $p$ in the first $q$ rows of $T_i$. 

We lists the data sets $(c_{qp}(i))$ and $(r_{qp}(i))$ with respect to the lexicographic order $\leq_{lex}$, first listing them by $q$, then by $p$. We define the column dominance preorder $\unlhd_c$ by 
$T_i\unlhd_c T_j$ if and only if $(c_{qp}(i))\leq_{lex} (c_{qp}(j))$, and the row dominance preorder $\unlhd_r$ by
$T_i\unlhd_r T_j$ if and only if $(r_{qp}(i))\leq_{lex} (r_{qp}(j))$.

For later use, denote by $T_{ev}$ the subtableau of $T$ consisting of the first $m$ rows of $T$, and by $T_{odd}$ the skew tableau $T\setminus T_{ev}$.
Denote by $T_{\ell}$ such tableau of shape $\lambda$ for which the entries in the $i$th row of $T_{\ell, ev}$ equals $i$,
and the entries in the $j$th column of $T_{\ell, odd}$ equals $m+j$.

Also denote $\ell(\lambda)$, tableau $T$ and its even part $_eT$ and odd part $_oT$.

\subsection{Superbialgebra $A(m|n)$}

Denote by $C$ a generic $(m+n)\times(m+n)$-matrix given as $C=(c_{ij})$, 
where the parity $|c_{ij}|=|i|+|j|$. Thus $C$ can be considered as a block matrix
\[\begin{pmatrix} C_{00}&C_{01}\\C_{10}&C_{11}\end{pmatrix},\] 
where $C_{00}$ is an $m\times m$ matrix, and $C_{11}$ is an $n\times n$ matrix, both with even entries, while
$C_{01}$ is an $m\times n$ matrix and $C_{10}$ is an $n\times m$ matrix, both with odd entries.

The superbialgebra $A(m|n)$ is generated by variables $c_{ij}$ subject to the supercommutativity relation
$c_{ij}c_{kl}=(-1)^{|c_{ij}||c_{kl}|}c_{kl}c_{ij}$, comultiplication $\Delta(c_{ij})=\sum_k c_{ik}\otimes c_{kj}$ 
and counit $\epsilon(c_{ij})=\delta_{ij}$. It has a natural grading by degrees $r$ and its component in degree $r$ is a supercoalgebra denoted by $A(m|n,r)$.

Let $E$ be the standard $S$-supermodule with the basis $e_i$ for $1\leq i \leq m + n$ and the coaction 
$\tau_E (e_i) =\sum_{1\leq k \leq m+n} e_k \otimes  c_{ki}$.
The superspace $E^{\otimes r}$ has a basis consisting of elements $e_i = e_{i_1}\otimes \ldots e_{i_r}$
for $i\in I(m|n,r)$, where 
$I(m|n,r)$ denotes the set of all maps from the set $\{1, \ldots , r\}$ to $\{1, \ldots, m+n\}$.
The coaction is given as $\tau_{E^{\otimes r}}(e_i)=\sum_{j\in I(m|n,r)} e_j \otimes \chi_{ji}.$

The functions $\chi_{i,j}$ are given as
\[\chi_{i,j}=(-1)^{\sum_{t=1}^r deg(i_t)(deg(i_{t+1})+deg(j_{t+1})+\ldots + deg(i_r)+deg(j_r))} c_{i,j}.\]
The symmetric group $\Sigma_r$ acts on multiindex $j\in I(m|n,r)$ naturally by permutation of coordinates. The $*$ action of $\Sigma_r$ on 
$j$, denoted $j*\sigma$ for $\sigma\in \Sigma_r$ is given as
$j*\sigma=(-1)^{s(j,\sigma)}j\sigma$, where
\[s(j,\pi)=\#|\{(a,b):1\leq a<b\leq r; \pi(a)>\pi(b), i_a,i_b>m\}|.\]

For a pair $(i,j)$, define $a_{k,l}=\#|\{1\leq t\leq r|i_t=k, j_t=l\}|$, 
\[\bar{\chi}_{i,j}=c_{11}^{a_{11}}\ldots c_{1,m+n}^{a_{1,m+n}}c_{21}^{a_{21}}\ldots c_{m+n,m+n}^{a_{m+n,m+n}},\]
$I^+=\{(i,j)|\chi_{i,j}=\bar{\chi}_{i,j}\}$ and $I^-=\{(i,j)|\chi_{i,j}=-\bar{\chi}_{i,j}\}$

We define the equivalence relation $i\sim k$ if and only if $i\pi=k$ for some $\pi\in \Sigma_r$.
Also define $(i,j)\sim(k,l)$  if and only if there is $\pi\in \Sigma_r$ such that 
$i\pi=k$ and $j\pi=l$.

Then $\chi_{i,j}=\pm \chi_{k,l}$ if and only if $(i,j)\sim (k,l)$.

\subsection{Schur superalgebra $S(m|n,r)$}

The superalgebra $S=S(m|n,r)$ is the dual of the supercoalgebra $A(m|n,r)$.
Generators $\xi_{i,j}\in S(m|n,r)$ are defined via bilinear pairing $\langle,\rangle$ by 
\[\begin{aligned}&\langle \chi_{k,l},\xi_{i,j}\rangle = \xi_{i,j}(\chi_{k,l}) = (-1)^{s(i,\pi)+s(j,\pi)} \text{ if } & (i,j)\sim (k,l) \text{ such that }\\
&& i\pi=k, j\pi=l;\\
&\langle \chi_{k,l},\xi_{i,j}\rangle=\xi_{i,j}(\chi_{k,l})=0 \text{ otherwise. }
\end{aligned}\]

Thus 
\[\begin{aligned}&\langle \chi_{k,l},\xi_{i,j}\rangle=\xi_{i,j}(\chi_{k,l})=1 &\text{ if } (k,l)\in I^+ \text{ and } (i,j)\sim (k,l);\\
&\langle \chi_{k,l},\xi_{i,j}\rangle=\xi_{i,j}(\chi_{k,l})=-1 &\text{ if } (k,l)\in I^- \text{ and } (i,j)\sim (k,l); \\ 
&\langle \chi_{k,l},\xi_{i,j}\rangle=\xi_{i,j}(\chi_{k,l})=0 &\text{ otherwise.}
\end{aligned}\]

Then $\xi_{i,j}=\pm \xi_{k,l}$ if and only if $(i,j)\sim (k,l)$.
The product of $\xi_{i,j}$ and $\xi_{k,l}$ is given as
\[\xi_{ij}\xi_{k,l}=\sum_{p,q} Z(i,j,k,l,p,q) \xi_{p,q},\]
where $Z(i,j,k,l,p,q)\neq 0$ implies $(i,j)\sim (p,s)$ and $(s,q)\sim(k,l)$.
For a detailed description of $Z(i,j,k,l,p,q)$, see \cite{mu}.

In particular, $\xi_{i,j}\xi_{k,l}\neq 0$ implies $j\sim k$.

Finally, 
\[\langle \chi_{p,q},\xi_{i,j}\xi_{k,l}\rangle=\sum_{s} \langle \chi_{p,s},\xi_{i,j}\rangle\langle \chi_{s,q},\xi_{k,l}\rangle.\]

The costandard supermodule $\nabla_S(\lambda)$ is the largest $S$-supersubmodule of the (left) injective supermodule $I_S(\lambda)$ such that all of its simple composition factors $L_S(\mu)$ satisfy $\mu\unlhd \lambda$. 
The standard supermodule $\Delta_S(\lambda)$ is the largest $S$-superfactormodule of the (left) projective supermodule 
$P_S(\lambda)$ such such that all of its simple composition factors $L_S(\mu)$ satisfy $\mu\unlhd \lambda$. 
The supermodule $\nabla(\lambda)$ has a simple socle isomorphic to $L_S(\lambda)$ and $\Delta_S(\lambda)$ has a simple top isomorphic to $L_S(\lambda)$.

\section{Reduction to semistandard tableaux}

Since the action of $\xi_{u,v}\in S$ is given as 
\begin{equation*}
\xi_{u,v}\chi_{i,j}=\sum_{a\in I(m|n,r)} \xi_{u,v}(\chi_{a,j})\chi_{i,a},
\end{equation*}
\begin{equation*}
\chi_{i,j}\xi_{u,v}=\sum_{b\in I(m|n,r)} \chi_{u,v}(\chi_{i,b})\chi_{b,j},
\end{equation*}
the action of $\xi_{i,j}$ on symmetrizers is given as 
\begin{equation*}
\xi_{u,v}T^{\lambda}[i:j]=\sum_{a\in I(m|n,r)}
\xi_{u,v}(\chi_{a,j})T^{\lambda}[i:a],
\end{equation*}
\begin{equation*}
T^{\lambda}[i:j]\xi_{u,v}=\sum_{b\in I(m|n,r)}
\chi_{u,v}(\chi_{i,b})T^{\lambda}[b:j].
\end{equation*}

Therefore, $A_{\lambda}$ is a $S$-superbimodule.

\subsection{$T_i$ fixed}

Assume first that $T_{i}$ is fixed. In this case we work with the definition
\[T[i:j]=\sum_{\rho\in R(T)} \sum_{\kappa\in C(T)}sgn(\kappa)
\chi_{i,j*\kappa*\rho}.\]

The following lemma is analogous to Lemma 4.3 of \cite{mz} and page 123 of 
\cite{mu}.

\begin{lm}
\label{l1} If $\sigma\in C(T)$, then $T[i:j\ast \sigma
]=sgn(\sigma)T[i:j]$. Thus, if $T_{j}$ has two identical even entries in the same column, then $T[i:j]=0$.
\end{lm}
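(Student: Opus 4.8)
The plan is to work with the fixed-$T_i$ form $T[i:j]=\sum_{\rho\in R(T)}\sum_{\kappa\in C(T)}\mathrm{sgn}(\kappa)\,\chi_{i,j*\kappa*\rho}$ and exploit the fact that the sums run over the subgroups $R(T)$ and $C(T)$, so that right-multiplying the index $j$ by a column permutation $\sigma\in C(T)$ merely reindexes one of these sums. Concretely, first I would observe that for $\sigma\in C(T)$, replacing $j$ by $j*\sigma$ gives
\[
T[i:j*\sigma]=\sum_{\rho\in R(T)}\sum_{\kappa\in C(T)}\mathrm{sgn}(\kappa)\,\chi_{i,(j*\sigma)*\kappa*\rho}.
\]
The key point is that the $*$-action is a genuine (twisted) action of $\Sigma_r$, so $(j*\sigma)*\kappa = j*(\sigma\kappa)$ up to the sign bookkeeping built into the definition of $*$; I would need to check that the sign cocycle $s(j,\cdot)$ combines correctly, i.e. that $(j*\sigma)*\kappa=j*(\sigma\kappa)$ on the nose as elements with sign, which is the standard associativity of the $*$-action and is the analogue of the cited Lemma 4.3 of \cite{mz} and page 123 of \cite{mu}.

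Next, I would substitute $\kappa'=\sigma\kappa$. Since $\sigma\in C(T)$ and $C(T)$ is a subgroup, as $\kappa$ ranges over $C(T)$ so does $\kappa'$, and this substitution leaves the inner summand $\chi_{i,j*\kappa'*\rho}$ unchanged in form while converting $\mathrm{sgn}(\kappa)$ into $\mathrm{sgn}(\sigma^{-1}\kappa')=\mathrm{sgn}(\sigma)\,\mathrm{sgn}(\kappa')$. Pulling the constant $\mathrm{sgn}(\sigma)$ outside both sums yields
\[
T[i:j*\sigma]=\mathrm{sgn}(\sigma)\sum_{\rho\in R(T)}\sum_{\kappa'\in C(T)}\mathrm{sgn}(\kappa')\,\chi_{i,j*\kappa'*\rho}=\mathrm{sgn}(\sigma)\,T[i:j],
\]
which is the first assertion.

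For the second assertion, suppose $T_j$ has two identical even entries, say the symbol $a\le m$ appearing in positions $s$ and $t$ lying in the same column of $T$. Let $\sigma$ be the transposition of $s$ and $t$; then $\sigma\in C(T)$ and $\mathrm{sgn}(\sigma)=-1$. Because the repeated symbol $a$ is even, swapping those two coordinates of $j$ produces the same multiindex: here I must check that $j*\sigma=j$ as a signed index, i.e. that the sign $s(j,\sigma)$ vanishes — and it does, since $s(j,\sigma)$ counts inversions among coordinates whose entries are both odd ($>m$), and our transposed entries are even, so no odd pair is disturbed and no previously-odd pair changes its relative order in a way that is counted. Hence $j*\sigma=j$, and the first part gives $T[i:j]=T[i:j*\sigma]=\mathrm{sgn}(\sigma)\,T[i:j]=-T[i:j]$, forcing $T[i:j]=0$ (char $K\ne 2$).

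The main obstacle is purely bookkeeping: verifying that the $\mathbb{Z}_2$-sign $s(j,\sigma)$ behaves correctly — both that the $*$-action is associative with the stated sign, and that $s(j,\sigma)=0$ when $\sigma$ only permutes positions carrying even entries. Once the associativity and triviality of these signs in the relevant cases are confirmed (which follows directly from the definition of $s(j,\pi)$ as a count over pairs with both entries $>m$), the reindexing arguments above are routine and the lemma follows.
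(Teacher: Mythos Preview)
Your proposal is correct and follows essentially the same argument as the paper: reindex the inner sum via $\bar\kappa=\sigma\kappa$ to pull out the factor $\mathrm{sgn}(\sigma)$, then for the second part take $\sigma$ to be a transposition of two identical even entries in the same column so that $j*\sigma=j$ and conclude $T[i:j]=-T[i:j]$. Your extra care in checking that the $*$-action is associative and that $s(j,\sigma)=0$ when $\sigma$ permutes positions with even entries is sound and makes explicit what the paper leaves implicit.
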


\begin{proof}
Write 
\[\begin{aligned}&T[i:j\ast \sigma]=
\sum_{\rho \in R(T)}\sum_{\kappa \in C(T)}sgn(\kappa )\chi _{i ,j\ast \sigma \ast \kappa \ast \rho }\\
&=sgn(\sigma) \sum_{\rho \in R(T)}\sum_{\bar{\kappa} \in C(T)}sgn(\bar{\kappa} )
\chi _{i ,j\ast \bar{\kappa} \ast \rho }=sgn(\sigma)T[i:j],\end{aligned}\]
where we set $\bar{\kappa}=\sigma\kappa$.

If $T_j$ has two identical even entries in the same column, choose $\sigma\in C(T)$ 
to be any transposition interchanging identical even entries. Then $j \ast \sigma=j$ and 
$T[i:j]=T[i:j\ast \sigma]=-T[i:j]$, showing $T[i:j]=0$.   
\end{proof}

The following lemma provides the Garnir relation - compare it with Lemma 4.4. of \cite{mz} and (2.4.1a) of \cite{mu}.

\begin{lm}
\label{l2} Let $X$ be a subset of $k$-th column of $T$, $Y$ be a subset of $%
k+1$-st column of $T$ such that the cardinality of the set $X\cup Y$ exceed
the length of the $k$-th column of $T$. Let $\{\sigma_1, \ldots, \sigma_l\}$
be a left transversal of $S_{X}\times S_{Y}$ in $S_{X\cup Y}$, Then $%
\sum_{t=1}^l sgn(\sigma_t)T[i:j*\sigma_t]=0$.
\end{lm}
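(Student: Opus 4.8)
The plan is to reduce the statement to the analogous classical Garnir relation for the functions $\chi_{i,j}$, which is essentially the identity $(2.4.1a)$ of \cite{mu} (or Lemma 4.4 of \cite{mz}). Working with the presentation
\[T[i:j]=\sum_{\rho\in R(T)}\sum_{\kappa\in C(T)}sgn(\kappa)\,\chi_{i,j\ast\kappa\ast\rho},\]
the key observation is that the inner double sum $\sum_{\rho\in R(T)}\sum_{\kappa\in C(T)}sgn(\kappa)\,\chi_{i,-\ast\kappa\ast\rho}$ is, up to the sign arising from the $\ast$-action, a fixed linear operator applied to $\chi_{i,j}$; call the result $F(\chi_{i,j})$. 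Then $T[i:j\ast\sigma_t]=F(\chi_{i,j\ast\sigma_t})$, so it suffices to prove $\sum_{t=1}^{l} sgn(\sigma_t)\,F(\chi_{i,j\ast\sigma_t})=0$, and by linearity of $F$ it is enough to prove
\[\sum_{t=1}^{l} sgn(\sigma_t)\,\chi_{i,j\ast\sigma_t}=0\]
after summing over the column group — more precisely, the vanishing must be established after applying the column antisymmetrizer, since individual terms need not cancel.

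First I would set up the combinatorial heart of the argument exactly as in the classical case. Since $|X\cup Y|$ exceeds the length of the $k$-th column, any column-standardization forces a repetition: for each coset representative $\sigma_t$, the tableau $T_{j\ast\sigma_t}$, after acting by $C(T)$, produces a configuration in which two entries of $X\cup Y$ lie in the same column of $T$. Here I must be careful about parity: Lemma \ref{l1} kills a term only when the repeated entry is \emph{even}. So I would split into the standard Garnir argument and handle the case of a repeated \emph{odd} entry separately — but an odd entry repeated in a column of a semistandard-type configuration is allowed, so one does not get vanishing for free from \ref{l1}; instead the cancellation there comes from the sign-reversing involution built into the coset sum $\sum sgn(\sigma_t)$ itself (pairing $\sigma_t$ with $\sigma_t\tau$ for a suitable transposition $\tau\in S_{X\cup Y}$), exactly as in the proof of $(2.4.1a)$ of \cite{mu}. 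I would verify that this pairing is compatible with the $\ast$-action signs $(-1)^{s(j,\sigma)}$, so that $\chi_{i,j\ast\sigma_t}$ and $\chi_{i,j\ast\sigma_t\tau}$ cancel with the intended signs.

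The main obstacle I anticipate is precisely the bookkeeping of the parity signs $(-1)^{s(j,\sigma)}$ attached to the $\ast$-action and the factor $(-1)^{\sum deg(i_t)(\cdots)}$ relating $\chi_{i,j}$ to $\bar\chi_{i,j}$: one must check that relabelling $\bar\kappa=\sigma\kappa$ (as in Lemma \ref{l1}) and the involution $\sigma_t\leftrightarrow\sigma_t\tau$ interact correctly, i.e. that no spurious sign survives. Once that is settled, the proof is a direct translation: write $T[i:j\ast\sigma_t]$ using the definition, interchange the (finite) sums over $t$, $\rho$, $\kappa$, group terms of the coset sum into cancelling pairs or invoke Lemma \ref{l1} for the even-repetition terms, and conclude $\sum_{t=1}^l sgn(\sigma_t)T[i:j\ast\sigma_t]=0$.
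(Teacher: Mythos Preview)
Your plan has a genuine gap: it misidentifies where the cancellation comes from. The paper's proof does not use Lemma~\ref{l1} or any argument about repeated \emph{values} in $T_j$; there is no hypothesis that any entries of $T_j$ coincide, so splitting into ``even repetition'' versus ``odd repetition'' and invoking Lemma~\ref{l1} simply does not apply. The Garnir relation is a statement about \emph{positions} in the basic tableau $T$, not about the labels $j_a$.

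The actual mechanism is this. Since $S_X\times S_Y\subset C(T)$, the product $\sigma_t\kappa$ runs (uniquely) over $B=S_{X\cup Y}C(T)$ as $\sigma_t$ runs over the transversal and $\kappa$ over $C(T)$; hence
\[
\sum_{t=1}^{l} sgn(\sigma_t)\,T[i:j*\sigma_t]
=\sum_{\rho\in R(T)}\sum_{\pi\in B} sgn(\pi)\,\chi_{i,\,j*\pi*\rho}.
\]
Peel's theorem (see \cite{peel} or (2.4.1a) of \cite{mu}) then says that $B$ decomposes into pairs $\{\pi,\pi\alpha_\pi\}$ with $\alpha_\pi$ a transposition lying in $R(T)$, not in $S_{X\cup Y}$. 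This is the point you are missing: the involution lands in the \emph{row} group, and the cancellation is effected by the row sum $\sum_{\rho\in R(T)}$, which absorbs $\alpha_\pi$ via the substitution $\bar\rho=\alpha_\pi\rho$, while $sgn(\pi\alpha_\pi)=-sgn(\pi)$ provides the sign flip. Your proposed pairing $\sigma_t\leftrightarrow\sigma_t\tau$ with $\tau\in S_{X\cup Y}$ cannot close the argument, because such a $\tau$ is already swallowed by the column sum and produces no net cancellation; and your outline never uses the row sum at all, which is precisely the ingredient that makes the pair $\{\pi,\pi\alpha_\pi\}$ cancel. Finally, the $*$-action bookkeeping you worry about is a non-issue here: associativity $(j*\pi)*\rho=j*(\pi\rho)$ means the reindexing $\bar\rho=\alpha_\pi\rho$ is purely group-theoretic and carries no extra parity sign.
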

\begin{proof}
Each element $\pi$ in $B=S_{X\cup Y}C(T)$ can be written uniquely as $\pi=\sigma_t\kappa$ for $\sigma_t$ as above and $\kappa\in C(T)$.
Therefore
\[\sum_{t=1}^l sgn(\sigma_t)T[i:j*\sigma_t]=\sum_{\rho \in R(T)}\sum_{\pi\in B} sgn(\pi) \chi_{i,j*\pi*\rho}.\]
By Peel's theorem (see \cite{peel} or (2.4.1a) of \cite{mu}), the set $B$ is a disjoint union of pairs $\{\pi, \pi\alpha_{\pi}\}$, 
where $\pi\in B$ and transposition $\alpha_{\pi}\in R(T)$.

Since $\alpha_{\pi}\in R(T)$, we have 
\[\sum_{\rho \in R(T)} \chi_{i,j*\pi\alpha_{\pi}*\rho} = \sum_{\bar{\rho} \in R(T)} \chi_{i,j*\pi*\bar{\rho}},\] 
where $\bar{\rho}=\alpha_{\pi}\rho$. Since $sgn(\pi\alpha_{\pi})=-sgn(\pi)$, we obtain 
\[\sum_{\rho\in R(T)} sgn(\pi) \chi_{i,j*\pi*\rho} + \sum_{\rho\in R(T)} sgn(\pi\alpha_{\pi}) \chi_{i,j*\pi\alpha_{\pi}*\rho} =0.\]
\end{proof}

Consider the column lexicographic order on the entries of the tableau $T$, listing the entries by columns first and then ordering entries in each row from top to bottom. 
Using this order, we induce the order on tableux $T_k$ of the same  shape as follows. The tableau $T_j<T_k$ if and only if the first entry, where $T_j$ and $T_k$ differ, is smaller in $T_j$ than in $T_k$.
A tableau $T_k$ is minimal in this order if and only if it is semistandard, or one of its rows contains two identical odd entries.

A reduction to semistandard tableaux is finalized with the help of Carter-Lusztig theorem (p. 214 of \cite{cl} or (2.4.1b) of \cite{mu}), compare it 
to Proposition 4.5 of \cite{mz}.

\begin{pr}\label{p3.1}
\label{p1} Every $T[i:j]$ is a $\mathbb{Q}$-linear combination of $T[i:l]$
for $T_l$ semistandard such that $T_l\unlhd_r T_j$.
\end{pr}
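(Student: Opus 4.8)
The plan is to run a standard straightening argument, using the three lemmas just established (Lemma \ref{l1} for the column-alternating property, Lemma \ref{l2} for the Garnir relations, and the invoked Carter--Lusztig theorem) exactly as one does for bideterminants in the classical Schur-algebra setting, but keeping track of the extra bookkeeping coming from the odd symbols. Working with $T_i$ fixed and the form $T[i:j]=\sum_{\rho\in R(T)}\sum_{\kappa\in C(T)}sgn(\kappa)\chi_{i,j*\kappa*\rho}$, I would induct on the tableau $T_j$ with respect to the column lexicographic order introduced just before the statement. The base case is when $T_j$ is minimal: then either $T_j$ is semistandard, in which case there is nothing to prove, or some row of $T_j$ contains two identical odd entries. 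In the latter case one argues that $T[i:j]=0$ by applying a row transposition $\tau\in R(T)$ swapping the two equal odd positions: since $j*\tau=j$ (the entries are equal) while the $*$-action contributes a sign $-1$ because both symbols are odd (this is exactly where $s(j,\tau)$ enters), one gets $T[i:j]=T[i:j*\tau]=-T[i:j]$, hence $T[i:j]=0$. This mirrors the second half of Lemma \ref{l1}, with ``even'' and ``column'' replaced by ``odd'' and ``row.''

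For the inductive step, assume $T_j$ is not minimal, so some column of $T_j$ has a strict descent failure or some row has a strict ascent failure after accounting for the parity conventions. First, using Lemma \ref{l1} I may sort the entries within each column of $T_j$ into weakly increasing order, picking up a sign; if any column then has a repeated even entry, $T[i:j]=0$ and we are done. So assume $T_j$ is column-semistandard. If it is not row-weakly-increasing, pick the first (in column lexicographic order) violation: there is a column $k$ and a row index $a$ with the $(a,k)$-entry strictly larger than the $(a,k{+}1)$-entry. Take $X$ to be the portion of column $k$ from row $a$ downward and $Y$ the portion of column $k{+}1$ from the top through row $a$; then $|X\cup Y|$ exceeds the length of column $k$, so Lemma \ref{l2} gives $\sum_t sgn(\sigma_t)T[i:j*\sigma_t]=0$ with $\sigma_1=\mathrm{id}$. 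Solving for $T[i:j]$ expresses it as a $\mathbb{Q}$-linear combination of the $T[i:j*\sigma_t]$ for $t\ge 2$, and a standard argument (the classical one, e.g. in \cite{cl} or (2.4.1b) of \cite{mu}) shows each $T_{j*\sigma_t}$ is strictly smaller than $T_j$ in the column lexicographic order after re-sorting columns via Lemma \ref{l1}. Hence induction applies and the result follows for $T[i:j]$ itself.

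It remains to verify the weight estimate $T_l\unlhd_r T_j$. Each operation used in the reduction either permutes entries within a column (Lemma \ref{l1}) or replaces $T_j$ by the tableaux $T_{j*\sigma_t}$ appearing in a Garnir relation along two adjacent columns. Column permutations do not change, for any $q$ and $p$, the number $r_{qp}$ of entries $\le p$ in the first $q$ rows at all, since they only move entries vertically within a fixed column — wait, that changes $r_{qp}$; rather, one checks that the Garnir exchange between columns $k$ and $k{+}1$ can only move entries to \emph{lower} rows in the sense recorded by $(r_{qp})$, so the row-content vector can only decrease in the lexicographic order $\le_{lex}$. Thus every tableau that survives to the end satisfies $T_l\unlhd_r T_j$, which is the claim.

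\textbf{Main obstacle.} The genuinely classical content — termination of straightening and the monotonicity of the combinatorial order under Garnir moves — is handled by the cited Carter--Lusztig machinery. The point requiring care, and the step I expect to be the real work, is checking that all the sign twists from the $*$-action and from the definition of $\chi_{i,j}$ in terms of $\bar\chi_{i,j}$ are consistent throughout: specifically, that Lemma \ref{l1} and Lemma \ref{l2} can be chained together without hidden sign discrepancies when the alphabet is a genuine super-alphabet, and that the vanishing statements (repeated even entry in a column, repeated odd entry in a row) are correctly aligned with the semistandardness conditions for $(m|n)$-hook tableaux. Once those parity bookkeeping issues are pinned down, the argument is a routine induction.
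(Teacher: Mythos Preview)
Your direct argument for the base case --- that $T[i:j]=0$ when $T_j$ has two identical odd entries in the same row --- does not work. You assert $T[i:j]=T[i:j*\tau]$ for $\tau\in R(T)$, but no such invariance on the \emph{second} index has been established: Lemma \ref{l1} gives $C(T)$-antisymmetry on $j$, and Lemma \ref{l3} gives $R(T)$-invariance on $i$, but there is no $R(T)$-symmetry on $j$ (indeed $R(T)C(T)\neq C(T)R(T)$, so one cannot reindex). The paper obtains this vanishing differently, as a byproduct of the Garnir analysis in the case $j_{a_q}=j_{b_q}>m$: when $T_j$ is minimal, every coset representative $\sigma_t$ satisfies $j\sigma_t=j$, so the Garnir relation collapses to a nonzero integer multiple of $T[i:j]$ equal to zero.

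This feeds into a second gap: your inductive step only treats the strict inequality $j_{a_q}>j_{b_q}$. The case $j_{a_q}=j_{b_q}>m$ is not confined to the base step; it occurs at intermediate stages as well, and it is precisely there that several $\sigma_t$ fix $j$, so that solving the Garnir relation for $T[i:j]$ forces division by the binomial coefficient $\binom{r-p+2}{r-q+1}$. This is the source of the $\mathbb{Q}$-coefficients in the statement. If your argument were correct as written it would yield $\mathbb{Z}$-coefficients, which is false in general --- compare Section 5, where one must pass to the modified symmetrizers $T\{i:j\}$ to restore integrality. Finally, your justification of $T_l\unlhd_r T_j$ is not an argument; the paper simply checks $T_{j\sigma_t}\unlhd_r T_j$ directly at each Garnir move, and the column-sorting step (Lemma \ref{l1}) does not affect the row-content data $(r_{qp})$ only because one should first fix content and then compare --- your ``wait'' aside correctly flags that the naive claim about vertical moves is wrong.
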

\begin{proof}
We proceed by induction on the order of tableaux $T_l$ of the same content defined above.

We can assume that entries in columns of $T_j$ are weakly increasing from top to bottom. Using Lemma \ref{l1} we can also assume that there are no identical even entries in any column of $T_j$. 

If $T_j$ is minimal but not semistandard, then we will see later that $T[i:j]=0$. This will verify the base step of the induction.

For the inductive step, assume that $T_j$ is not semistandard and consider the first position (in column lexicographic order) where this condition is violated. Depicted below is a part of the labeling of the basic tableau $T$ consisting of the entries in its $d$-th and $d+1$-st columns $C_d$ and $C_{d+1}$, together with the two cases for corresponding entries in $T_i$. In the first case $j_{a_q}>j_{b_q}$ is first such inequality; and in the second case $j_{a_q}=j_{b_q}>m$ is first such equality.

\[T:
\begin{array}{ccc}
C_d&&C_{d+1}\\
&&\\
a_1&&b_1\\
\ldots&&\ldots\\
\ldots&&b_{p-1}\\
\ldots&&b_p\\
\ldots&&\ldots\\
a_q&&b_q\\
\ldots&&\ldots\\
a_r&&\\
a_{r+1}&&\\
\ldots\\
a_s&&
\end{array}
\qquad
T_j:
\begin{array}{ccc}
C_d&&C_{d+1}\\
&&\\
j_{a_1}&&j_{b_1}\\
\ldots&&\ldots\\
\ldots&&j_{b_{p-1}}\\
\ldots&&j_{b_p}\\
\ldots&&\ldots\\
j_{a_q}&>&j_{b_q}\\
\ldots&&\ldots\\
j_{a_r}&&\\
j_{a_{r+1}}&&\\
\ldots&&\\
j_{a_s}&&
\end{array}\]
or 
\[ T_j:
\begin{array}{ccc}
C_d&&C_{d+1}\\
&&\\
j_{a_1}&&j_{b_1}\\
\ldots&&\ldots\\
\ldots&&j_{b_{p-1}}\\
&&\wedge\\
\ldots&&j_{b_p}\\
&&\vert\vert\\
\ldots&&\ldots\\
&&\vert\vert\\
j_{a_q}&=&j_{b_q}\\
\vert\vert&&\\
\ldots&&\ldots\\
\vert\vert&&\\
j_{a_r}&&\\
\wedge&&\\
j_{a_{r+1}}&&\\
\ldots&&\\
j_{a_s}&&
\end{array}
\]

We consider two cases depending on which condition is violated first (that is either $j_{a_q}>j_{b_q}$ or
$j_{a_q}=j_{b_q}$.)

Let $X=\{a_q \ldots, a_s\}$ be a subset of the $d$-th column of $T$, 
$Y=\{b_1, \ldots, b_q\}$ be a subset of the $d+1$-st column of $T$, and let 
$\{\sigma_1=1, \ldots, \sigma_l\}$ be a left transversal of $S_{X}\times S_{Y}$ in $S_{X\cup Y}$.

Case \underline{$j_{a_q}>j_{b_q}$:}

By Lemma \ref{l2}
we obtain 
\[T[i:j]=-\sum_{t=2}^l sgn(\sigma_t)T[i:j*\sigma_t].\]
Since $j_{b_1}\leq \ldots \leq j_{b_q}<j_{a_q}\leq \ldots \leq j_{a_s}$ and each $T_{j\sigma_t}$, for $t>1$, has one of the entries $j_{b_1}, \ldots, j_{b_q}$ in its $d$-th column, we infer that $T_{j\sigma_t}<T_j$ for each $t>1$.
Additionally, $T_{j\sigma_t}\unlhd_r T_j$.

Case \underline{$j_{a_q}=j_{b_q}>m$:}

By Lemma \ref{l2}
we obtain 
\[\sum_{t=1}^l sgn(\sigma_t)T[i:j*\sigma_t]=0.\]

Since 
\[j_{b_1}\leq \ldots \leq j_{b_{p-1}}=j_{b_p}=\ldots =j_{b_q}=j_{a_q}=\ldots = j_{a_r}<j_{a{r+1}}\leq \ldots \leq j_{a_s},\]
there are $\binom{r-p+2}{r-q+1}=\frac{(r-p+2)!}{(q-p+1)!(r-q+1)!}$ elements $\sigma_t$ such that $j=j\sigma_t$. Those $\sigma_t$ only permute the identical odd elements and thus $j*\sigma_t=sgn(\sigma_t)j$, and 
$sgn(\sigma_t)T[i:j*\sigma_t]=T[i:j]$.
If $\sigma_t$ is such that $j\sigma_t\neq j$, then the $d$-th column of $T_{j\sigma_t}$ contains one of the entries
$j_{b_1}, \ldots, j_{b_{p-1}}$, which implies $T_{j\sigma_t}<T_j$.
Additionally, $T_{j\sigma_t}\unlhd_r T_j$.

Therefore
\[T[i:j]=-\frac{1}{\binom{r-p+2}{r-q+1}}\sum_{j\sigma_t\neq j} sgn(\sigma_t)T[i:j*\sigma_t].\]
This concludes the inductive step of our argument.

If $T_j$ is minimal but not semistandard, then it contains two identical odd entries in the same row.
If $j\sigma_t\neq j$ for some $\sigma_t$, then $T_{j\sigma_t}<T_j$ contradicting the minimality of $T_j$.
Therefore $j\sigma_t=j$ for every $\sigma_t$ which implies $T[i:j]=0$.
\end{proof}

\subsection{$T_j$ fixed}

Now assume that $T_j$ is fixed. In this case we work with the definition
\[T[i:j]=\sum_{\kappa \in C(T)}\sum_{\rho\in R(T)} sgn(\kappa)
\chi_{i*\rho*\kappa,j}.\]

\begin{lm}
\label{l3} If $\sigma\in R(T)$, then $T[i*\sigma:j]=T[i:j]$. Thus, if $T_i$
has two identical odd entries in the same
row, then $T[i:j]=0$.
\end{lm}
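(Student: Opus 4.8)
The plan is to mirror the proof of Lemma \ref{l1}, with the roles of rows and the $*$-action adjusted. The statement concerns the left variable $i$ under the action of $R(T)$, using the third form of the symmetrizer in which the summation over $\rho\in R(T)$ acts on the left index. So first I would write
\[T[i\ast\sigma:j]=\sum_{\kappa\in C(T)}\sum_{\rho\in R(T)} sgn(\kappa)\,\chi_{i\ast\sigma\ast\rho\ast\kappa,\,j}\]
for $\sigma\in R(T)$, and then reindex the inner sum by setting $\bar\rho=\sigma\rho$. Since $\sigma\in R(T)$ and $R(T)$ is a subgroup of $\Sigma_r$, the map $\rho\mapsto\bar\rho=\sigma\rho$ is a bijection of $R(T)$ onto itself, and $i\ast\sigma\ast\rho = i\ast(\sigma\rho) = i\ast\bar\rho$ by associativity of the $*$-action. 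Hence the double sum is unchanged and $T[i\ast\sigma:j]=T[i:j]$. The one point to check carefully is that no sign appears: in Lemma \ref{l1} the factor $sgn(\sigma)$ showed up because $\sigma$ there ranged over $C(T)$ and $sgn$ is a homomorphism, with $sgn(\sigma\kappa)=sgn(\sigma)sgn(\kappa)$; here $\sigma$ ranges over $R(T)$ and there is no sign weight attached to the row permutations, so the substitution is sign-free. This is the expected minor subtlety — confirming that the identity $i\ast\sigma\ast\rho=i\ast(\sigma\rho)$ holds on the nose for the twisted $*$-action (not merely up to a sign), which follows from the cocycle identity $s(i,\sigma\rho)=s(i,\sigma)+s(i\sigma,\rho)$ satisfied by the exponent function $s$.

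For the second assertion, suppose $T_i$ has two identical odd entries in the same row. Choose $\sigma\in R(T)$ to be the transposition interchanging the two positions carrying these entries. Then, because the two entries are equal, the underlying multiindex is fixed by the permutation, $i\sigma=i$; however, since both entries are odd ($>m$), the $*$-action picks up the sign $(-1)^{s(i,\sigma)}$ with $s(i,\sigma)=1$, so $i\ast\sigma=-i$. I would note that $T[i:j]$ is linear in its left slot in the sense that $\chi_{-i,j}=-\chi_{i,j}$ (following the convention that scaling a multiindex entry by $-1$ scales $\chi$ correspondingly, as used implicitly already in the symmetrizer manipulations), so $T[i\ast\sigma:j]=T[-i:j]=-T[i:j]$. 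Combining with the first part, $T[i:j]=T[i\ast\sigma:j]=-T[i:j]$, forcing $2\,T[i:j]=0$ and hence $T[i:j]=0$ since we are over $\mathbb{Q}$ (or any field of characteristic $\neq 2$).

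The main obstacle, such as it is, is purely bookkeeping: making sure the twisted $*$-action behaves associatively and that the sign conventions for odd transpositions are applied consistently between the two index slots. Everything else is a direct transcription of the argument for Lemma \ref{l1}, with $C(T)$ replaced by $R(T)$, "even" replaced by "odd", "column" replaced by "row", and the $sgn$ weight dropped. I would therefore keep the write-up short, referring back to Lemma \ref{l1} for the structure and only spelling out the sign computation for the odd transposition.
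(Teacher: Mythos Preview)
Your proposal is correct and follows essentially the same argument as the paper: reindex the sum over $R(T)$ via $\bar\rho=\sigma\rho$ to get the first claim, then for the second claim choose $\sigma$ to be the transposition of two identical odd entries in a row so that $i*\sigma=-i$, giving $T[i:j]=-T[i:j]$. Your extra remarks on the associativity of the $*$-action and the characteristic~$\neq 2$ hypothesis are welcome clarifications, but the core proof is identical to the paper's.
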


\begin{proof}
Write 
\[\begin{aligned}&T[i*\sigma :j]=
\sum_{\kappa \in C(T)}\sum_{\rho \in R(T)} sgn(\kappa )\chi _{i*\sigma*\rho*\kappa ,j}\\
&= \sum_{\kappa \in C(T)}\sum_{\bar{\rho} \in R(T)}sgn(\kappa )
\chi _{i*\bar{\rho} *\kappa,j }=T[i:j],\end{aligned}\]
where we set $\bar{\rho}=\sigma\rho$.

If $T_j$ has two identical odd entries in the same row, choose $\sigma\in R(T)$ 
to be any transposition interchanging identical odd entries. Then $i* \sigma=-i$ and 
$T[i:j]=T[i*\sigma:j]=-T[i:j]$, showing $T[i:j]=0$.   
\end{proof}

\begin{lm}
\label{l4} Let $X$ be a subset of $k$-th row of $T$, $Y$ be a subset of $k+1$%
-st row of $T$ such that the cardinality of the set $X\cup Y$ exceed the
length of the $k$-th row of $T$. Let $\{\sigma_1, \ldots, \sigma_l\}$ be a
left transversal of $S_{X}\times S_{Y}$ in $S_{X\cup Y}$, Then $\sum_{t=1}^l
T[i*\sigma_t:j]=0$.
\end{lm}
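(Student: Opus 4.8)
The plan is to mirror the proof of Lemma~\ref{l2} verbatim, with the roles of rows and columns swapped and the sign $sgn(\kappa)$ replaced by the trivial sign on the row transversal (which is why the summands carry no $sgn(\sigma_t)$ factor). Concretely, I work with the $T_j$-fixed form
\[
T[i:j]=\sum_{\kappa\in C(T)}\sum_{\rho\in R(T)}sgn(\kappa)\,\chi_{i*\rho*\kappa,j},
\]
and observe that every element $\pi$ of the set $B=S_{X\cup Y}R(T)$ can be written uniquely as $\pi=\sigma_t\rho$ with $\sigma_t$ a chosen left-coset representative of $S_X\times S_Y$ in $S_{X\cup Y}$ and $\rho\in R(T)$. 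Summing over $t$ therefore collapses to
\[
\sum_{t=1}^{l}T[i*\sigma_t:j]=\sum_{\kappa\in C(T)}\sum_{\pi\in B}sgn(\kappa)\,\chi_{i*\pi*\kappa,j}.
\]

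Next I invoke Peel's theorem (the same reference as in Lemma~\ref{l2}, \cite{peel} or (2.4.1a) of \cite{mu}) in its row form: because $|X\cup Y|$ exceeds the length of the $k$-th row of $T$, the set $B$ decomposes as a disjoint union of pairs $\{\pi,\pi\alpha_\pi\}$ where $\alpha_\pi$ is a transposition lying in $C(T)$. Since $\alpha_\pi\in C(T)$, a reindexing $\bar\kappa=\alpha_\pi\kappa$ of the inner column sum gives
\[
\sum_{\kappa\in C(T)}sgn(\kappa)\,\chi_{i*\pi\alpha_\pi*\kappa,j}
=\sum_{\bar\kappa\in C(T)}sgn(\alpha_\pi)sgn(\bar\kappa)\,\chi_{i*\pi*\bar\kappa,j}
=-\sum_{\kappa\in C(T)}sgn(\kappa)\,\chi_{i*\pi*\kappa,j},
\]
so the contributions of $\pi$ and $\pi\alpha_\pi$ cancel in pairs and the whole sum is zero.

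The one point that genuinely needs care — and which I expect to be the main (if minor) obstacle — is bookkeeping the $*$-action signs: unlike in Lemma~\ref{l2}, here the permutations act on the left index $i$, so I must check that $i*(\pi\alpha_\pi)$ and $(i*\pi)*\alpha_\pi$ agree as signed multiindices and that moving $\alpha_\pi$ from the $\pi$-slot into the $C(T)$-summation slot produces exactly the sign $sgn(\alpha_\pi)=-1$ and nothing extra. This is the associativity and compatibility of the $*$-action established in the Preliminaries, applied to $i$ rather than $j$; once that is in place the cancellation is immediate. The final sentence of the lemma's proof then records that $sgn(\pi\alpha_\pi)\,\chi_{i*\pi\alpha_\pi*\kappa,j}$ together with $sgn(\pi)\,\chi_{i*\pi*\kappa,j}$ sum to zero, exactly as in Lemma~\ref{l2}, noting that here there is no outer $sgn(\sigma_t)$ because the row transversal representatives are not weighted by sign in the statement.
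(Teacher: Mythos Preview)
Your argument is correct and essentially matches the paper's proof: both reduce to the identity $\sum_{t}T[i*\sigma_t:j]=\sum_{\kappa\in C(T)}\sum_{\pi\in B}sgn(\kappa)\chi_{i*\pi*\kappa,j}$ for $B=S_{X\cup Y}R(T)$, decompose $B$ into Peel pairs $\{\pi,\pi\alpha_\pi\}$ with $\alpha_\pi\in C(T)$, and cancel via the substitution $\bar\kappa=\alpha_\pi\kappa$. The only presentational difference is that the paper derives the ``row form'' of Peel's theorem by explicitly transposing $T$ to $T'$ (so the column statement applies verbatim) and then transposing back, whereas you invoke the row form directly; also, your closing sentence about $sgn(\pi)$ and $sgn(\pi\alpha_\pi)$ is a slip carried over from Lemma~\ref{l2}---here there is no $sgn(\pi)$ factor, only $sgn(\kappa)$, as you had it correctly in the displayed computation.
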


\begin{proof}
Apply the transposition that changes $\lambda$ to $\lambda'$,  the basic tableau $T$ to $T'$,
and $T_i$ to $T'_i$ in such a way that $C(T)$ corresponds to $R(T')$, and $R(T)$ corresponds to $C(T')$. 
Under this transposition, $X'$ is a subset of the $k$-th colunmn of $T'$, $Y'$ is a subset of $k+1$-st column of $T'$
and the cardinality of $X'\cup Y'$ exceeds the length of the $k$-th column of $T'$. 
The right transversal $\{\sigma_1, \ldots, \sigma_t\}$ of $S_{X}\times X_{Y}$ in $S_{X\cup Y}$ is transposed to 
a left transversal $\{\sigma'_1, \ldots, \sigma'_t\}$ of $S_{X'}\times S_{Y'}$ in $S_{X'\cup Y'}$.

Each element $\pi'$ in $B'=S_{X'\cup Y'}C(T')$ can be written uniquely as $\pi'=\sigma'_t\kappa'$ for $\sigma'_t$ as above and $\kappa'\in C(T')$.
By Peel's theorem, the set $B'$ is a disjoint union of pairs $\{\pi', \pi'\alpha'_{\pi'}\}$, 
where $\pi'\in B'$ and transposition $\alpha'_{\pi'}\in R(T')$.
Thus $B=S_{X\cup Y}R(T)$ is a disjoint union of pairs $\{\pi, \pi\alpha_{\pi}\}$, where $\pi\in B$ and $\alpha_{\pi}\in C(T)$.

Therefore
\[\sum_{t=1}^l T[i*\sigma_t:j]=\sum_{\kappa \in C(T)}\sum_{\pi\in B} sgn(\kappa) \chi_{i*\pi*\kappa,j}.\]

Since $\alpha_{\pi}\in C(T)$ and $sgn(\kappa)=-sgn(\alpha_{\pi}\kappa)$, we have 
\[\sum_{\kappa \in C(T)}sgn(\kappa)\chi_{i*\pi\alpha_{\pi}*\kappa,j} = 
-\sum_{\bar{\kappa} \in C(T)} sgn(\bar{\kappa})\chi_{i*\pi*\bar{\kappa},j},\] 
where $\bar{\kappa}=\alpha_{\pi}\kappa$. Therefore,
\[\sum_{\kappa\in C(T)} sgn(\kappa) \chi_{i*\pi*\kappa,j} + \sum_{\kappa\in C(T)} sgn(\kappa) \chi_{i*\pi\alpha_{\pi}*\kappa,j} =0.\]
\end{proof}

\begin{pr}\label{p3.2}
\label{p2} Every $T[i:j]$ is a $\mathbb{Q}$-linear combination of $T[k:j]$ 
for $T_k$ semistandard such that $T_k\unlhd_c T_i$.
\end{pr}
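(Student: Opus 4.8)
The statement is the row-analogue of Proposition \ref{p3.1}, and the natural plan is to mirror that proof after passing to the transposed tableau, exactly as was done inside the proof of Lemma \ref{l4}. Apply the transposition sending $\lambda\mapsto\lambda'$, $T\mapsto T'$, $T_i\mapsto T'_i$, so that $R(T)$ corresponds to $C(T')$ and $C(T)$ corresponds to $R(T')$. Under this correspondence the expression $T[i:j]=\sum_{\kappa\in C(T)}\sum_{\rho\in R(T)}sgn(\kappa)\chi_{i*\rho*\kappa,j}$ becomes, up to the bookkeeping already recorded above, a ``column symmetrizer'' with $i$ in the second slot; Lemma \ref{l3} plays the role of Lemma \ref{l1}, and Lemma \ref{l4} plays the role of the Garnir relation Lemma \ref{l2}. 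So the whole machinery of Proposition \ref{p3.1} transfers.

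First I would set up the correct order. Define the row lexicographic order on entries of $T$: list entries by rows first, within each row from left to right. Induce the order on tableaux $T_k$ of the same content: $T_k<T_{k'}$ iff at the first entry where they differ, $T_k$ has the smaller value. A tableau is minimal in this order iff it is semistandard or has two equal even entries in some column. Then I would run the induction on this order. The base case: if $T_i$ is minimal and not semistandard, it has two identical even entries in a column, so by Lemma \ref{l3} (transposed: identical even entries in a column of $T_i$ now play the role of identical odd entries in a row) — actually more directly, by Lemma \ref{l3} applied after transposition, or simply by the symmetry argument — one gets $T[i:j]=0$, matching the last paragraph of the proof of Proposition \ref{p3.1}.

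For the inductive step: assume entries in rows of $T_i$ are weakly increasing left to right (achievable using Lemma \ref{l3}), and no repeated even entry in a column. If $T_i$ is not semistandard, take the first position in row lexicographic order violating the semistandard condition; this is either a strict decrease $i_{a_q}>i_{b_q}$ going down a column, or an equality $i_{a_q}=i_{b_q}\le m$ of even symbols in a column (compare the two cases in Proposition \ref{p3.1}, with ``odd'' replaced by ``even'' and ``column'' interchanged with ``row''). Choose $X$ in row $d$, $Y$ in row $d+1$ with $|X\cup Y|$ exceeding the length of row $d$, and a left transversal $\{\sigma_1=1,\dots,\sigma_l\}$ of $S_X\times S_Y$ in $S_{X\cup Y}$. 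Lemma \ref{l4} gives $\sum_{t=1}^l T[i*\sigma_t:j]=0$ (no signs, since the row transversal contributes none on this side). In the strict-decrease case solve for $T[i:j]=-\sum_{t\ge 2}T[i*\sigma_t:j]$, and check that each $T_{i*\sigma_t}$ with $t\ge 2$ satisfies $T_{i*\sigma_t}<T_i$ in the row order and $T_{i*\sigma_t}\unlhd_c T_i$ (the latter because moving entries up from row $d+1$ to row $d$ can only lexicographically decrease the data $(c_{qp})$ — one verifies $c_{qp}(i*\sigma_t)\le c_{qp}(i)$ for the first index where equality fails, just as $r_{qp}$ was controlled before). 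In the equality case, the $\sigma_t$ fixing $i$ (up to sign) are counted by a binomial coefficient $\binom{r-p+2}{r-q+1}$; since $i*\sigma_t=sgn(\sigma_t)\,i$ for those — here using that they permute identical \emph{even} entries, so $sgn(\sigma_t)\,i = i$ with no sign, unlike the odd case — one gets $T[i:j]=-\frac{1}{\binom{r-p+2}{r-q+1}}\sum_{i\sigma_t\neq i}T[i*\sigma_t:j]$ with all terms strictly smaller and $\unlhd_c$-dominated. Apply the inductive hypothesis to finish.

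The main obstacle is not conceptual but bookkeeping: verifying that the transposition of Lemma \ref{l4}'s proof interacts correctly with the $*$-action signs, and in particular checking carefully the sign in the equality case. In Proposition \ref{p3.1} the repeated entries were odd, so $j*\sigma_t = sgn(\sigma_t)j$ produced a cancellation with $sgn(\sigma_t)$ in Lemma \ref{l2}'s relation; here the relevant repeated entries are even, the $*$-action of a permutation of even positions is trivial ($i*\sigma_t = i$), and Lemma \ref{l4}'s relation carries no sign — so one must confirm these two absences of signs match up and still yield the same binomial-coefficient identity $\sum_{i\sigma_t = i} T[i:j] = \binom{r-p+2}{r-q+1}T[i:j]$. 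Once that is checked, the argument closes exactly as the proof of Proposition \ref{p3.1}.
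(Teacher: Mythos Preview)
Your proposal is correct and follows essentially the same route as the paper: argue by analogy with Proposition~\ref{p3.1}, using the row lexicographic order on tableaux $T_i$, replacing Lemmas~\ref{l1} and~\ref{l2} by Lemmas~\ref{l3} and~\ref{l4}, and noting that $i*\sigma_t=i$ when $\sigma_t$ permutes identical even entries (so the sign bookkeeping you flag as the ``main obstacle'' indeed works out). The two displayed identities you derive in the two cases are exactly those the paper records.

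One small correction to your base case: Lemma~\ref{l3} does \emph{not} directly yield $T[i:j]=0$ when $T_i$ has two identical even entries in a column --- that lemma only handles repeated odd entries in a row, and there is no ``transposed Lemma~\ref{l3}'' available for the first slot. The vanishing in this minimal-but-not-semistandard case instead comes from the Garnir relation (Lemma~\ref{l4}): minimality of $T_i$ forces $i\sigma_t=i$ for every $\sigma_t$, whence $\sum_t T[i*\sigma_t:j]=0$ collapses to a positive multiple of $T[i:j]$. You do reference ``the last paragraph of the proof of Proposition~\ref{p3.1}'', which is exactly this argument, so just drop the appeal to Lemma~\ref{l3} there.
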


\begin{proof}
The proof is analogous to that of Proposition \ref{p1}. We need to consider row lexicographic order on tableux $T_i$, and  adjacent rows $R_d$ and $R_{d+1}$ instead of coluns $C_d$ and $C_{d+1}$. We apply
Lemmae \ref{l3} and \ref{l4} instead of Lemmae \ref{l1} and \ref{l2} and observe that $\sigma_t$ permuting identical even entries satisfies $i*\sigma_t=i$.
In the case $i_{a_q}>i_{i_{b_q}}$ we get an identity
\[T[i:j]=-\sum_{t=2}^l T[i*\sigma_t:j],\]
where $T_{i\sigma_t}\unlhd_c T_i$.

In the case $i_{a_q}=i_{b_q}<m$ we get an identity
\[T[i:j]=-\frac{1}{\binom{r-p+2}{r-q+1}}\sum_{i\sigma_t\neq i} T[i*\sigma_t:j],\]
where $T_{i\sigma_t}\unlhd_c T_i$.
\end{proof}

\begin{tr}\label{t3.1}
Every symmetrizer $T^{\lambda}[i:j]$ is a $\mathbb{Q}$-linear combination of symmetrizers $T^{\lambda}[k:l]$, 
where $T_i,T_j$ are semistandard, $T_k\unlhd_c T_i$ and $T_l\unlhd_r T_j$.
\end{tr}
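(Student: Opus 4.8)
The plan is to combine Propositions \ref{p3.1} and \ref{p3.2} by applying them in succession, being careful that each application does not destroy the good property obtained from the other. First I would fix $T_j$ and apply Proposition \ref{p3.2} to the left index: this writes $T^{\lambda}[i:j]$ as a $\mathbb{Q}$-linear combination $\sum_k c_k\, T^{\lambda}[k:j]$ with each $T_k$ semistandard and $T_k \unlhd_c T_i$. Then, for each term $T^{\lambda}[k:j]$ appearing with $T_k$ semistandard, I would fix that $T_k$ and apply Proposition \ref{p3.1} to the right index, obtaining $T^{\lambda}[k:j] = \sum_l d_{k,l}\, T^{\lambda}[k:l]$ with each $T_l$ semistandard and $T_l \unlhd_r T_j$. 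Substituting back yields $T^{\lambda}[i:j] = \sum_{k,l} c_k d_{k,l}\, T^{\lambda}[k:l]$ with $T_k, T_l$ semistandard, $T_k \unlhd_c T_i$, $T_l \unlhd_r T_j$.

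The one point requiring genuine care — and the step I expect to be the main obstacle — is verifying that the second application of Proposition \ref{p3.1} does not spoil the semistandardness of the \emph{left} tableau $T_k$ already achieved in the first step. Proposition \ref{p3.1} only alters the right multiindex, replacing $j$ by various $l$, and leaves the left multiindex $k$ untouched; so semistandardness of $T_k$ is automatically preserved. Symmetrically, one should double-check that $\unlhd_c$ is a statement purely about the left index and $\unlhd_r$ purely about the right index, so that the two dominance conditions do not interfere. This is immediate from the definitions of $\unlhd_c$ and $\unlhd_r$ in terms of the column-counts $c_{qp}$ and row-counts $r_{qp}$ of a single tableau. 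Hence no conflict arises, and the order in which the two reductions are applied is irrelevant for the final conclusion (though the intermediate indices differ).

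I would therefore present the proof as: apply Proposition \ref{p3.2} with $T_j$ fixed to reduce the left index to semistandard tableaux $T_k \unlhd_c T_i$; then, for each resulting $T^{\lambda}[k:j]$, apply Proposition \ref{p3.1} with $T_k$ fixed to reduce the right index to semistandard tableaux $T_l \unlhd_r T_j$; observe that the latter step does not touch the left index, so $T_k$ remains semistandard; collect terms. The entire argument is a two-line bookkeeping once the two propositions are in hand, so I would keep the write-up short and emphasize only the non-interference of the two reductions.

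\begin{proof}
By Proposition \ref{p3.2} applied with $T_j$ fixed, we may write
\[
T^{\lambda}[i:j]=\sum_{k} c_{k}\, T^{\lambda}[k:j],
\]
where the sum is over finitely many semistandard tableaux $T_k$ with $T_k\unlhd_c T_i$ and $c_k\in\mathbb{Q}$. Now fix one such $T_k$ and apply Proposition \ref{p3.1} with $T_k$ fixed: this gives
\[
T^{\lambda}[k:j]=\sum_{l} d_{k,l}\, T^{\lambda}[k:l],
\]
where the sum is over finitely many semistandard tableaux $T_l$ with $T_l\unlhd_r T_j$ and $d_{k,l}\in\mathbb{Q}$. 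Note that this second reduction modifies only the right multiindex, leaving the left multiindex $k$ unchanged, so $T_k$ remains semistandard in every resulting term. Substituting, we obtain
\[
T^{\lambda}[i:j]=\sum_{k,l} c_k d_{k,l}\, T^{\lambda}[k:l],
\]
a $\mathbb{Q}$-linear combination of symmetrizers $T^{\lambda}[k:l]$ with $T_k$ and $T_l$ semistandard, $T_k\unlhd_c T_i$ and $T_l\unlhd_r T_j$, as claimed.
\end{proof}
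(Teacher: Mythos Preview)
Your proof is correct and follows exactly the paper's own approach: the paper's proof is the single sentence ``The statement follows from Propositions \ref{p3.1} and \ref{p3.2},'' and you have simply spelled out the two-step application and the non-interference observation that justifies it.
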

\begin{proof}
The statement follows from Propositions \ref{p3.1} and \ref{p3.2}.
\end{proof}

As noted earlier, if $\lambda$ is not a $(m|n)$-hook partition, then there are no semistandard tableaux of shape $\lambda$. This implies that all symmetrizers $T^{\lambda}[k:l]$ vanish for non $(m|n)$-hook partitions.

\section{Capelli operators}

We first define Capelli operators for our setting by adapting the notation from \cite{cl2}.

\subsection{Definition and properties of Capelli operators}\label{4.1}

We extend our alphabet that included even symbols $1, \ldots, m$ and odd symbols $m+1, \ldots, m+n$
by new colored symbols $\underline{1}, \ldots, \underline{k}, \ldots$ that are even, and new colored symbols 
$\overline{1}, \ldots, \overline{l}, \ldots$ that are odd. 
We denote by $\tilde{A}$ the polynomial ring $K[c_{uv}]$ in indeterminates $c_{uv}$, where $u,v\in \{1,\ldots, m+n\}
\cup\{\underline{k}|1\leq k\}\cup \{ \overline{l}|1\leq l\}$.

Assume the uncolored letter $i$ appears exactly $s$ times in the tableau $T_k$.
The map $D^r_L(\overline{j},i)$ (and $D^r_P(\underline{j},i)$, respectively) sends a tableau $T_k$ to a sum 
$\sum_{t=1}^{\binom{s}{r}} T_{k_t}$ of tableaux, where each $T_{k_t}$ is obtained from $T_k$ by replacing a subset of $r$ symbols $i$ in $T_k$ by $r$ colored symbols $\overline{j}$ (and $\underline{j}$, respectively).
It is understood that $D^r_L(\overline{j},i)(T_k)=0$ and $(T_k)D^r_P(\underline{j},i)=0$ whenever $s<r$, and also that 
$D^0_L(\overline{j},i)$ and $D^0_P(\underline{j},i)$ are identity maps.

The map $D^r_L(\overline{j},i)$ induces the (letter) polarization operator that sends the monomial $\chi_{kl}$ to the sum $\sum_{t=1}^{\binom{s}{r}} \chi_{k_tl}$. 
The map $D^r_P(\overline{j},i)$ induces the (place) polarization operator that sends the monomial $\chi_{lk}$ to the sum $\sum_{t=1}^{\binom{s}{r}} \chi_{lk_t}$.

It is clear from the definition that all polarization operators commute with each other.

The polarization operators act on symmetrizers $T[k:l]$ of shape $\lambda$ in the following simple way.

If $D^r_L(\overline{j},i)(T_k)=\sum_{t=1}^{z_U} T_{k_t}$, then

\[D^r_L(\overline{j},i)T[k:l]=\sum_{t=1}^{z_U}\sum_{\kappa\in C(T), \rho\in R(T)} sgn(\kappa) \chi_{k_t,l*\kappa*\rho}=\sum_{t=1}^{z_U} T[k_t:l].\]

Analogously, if $(T_l)D^r_L(\overline{j},i)=\sum_{t=1}^{z_U} T_{l_t}$, then
\[T[k:l]D^r_P(\underline{j},i)=\sum_{t=1}^{z_V}\sum_{\kappa\in C(T), \rho\in R(T)} sgn(\kappa) \chi_{k*\rho*\kappa,l_t}=\sum_{t=1}^{z_V} T[k:l_t]. \]

For an uncolored tableau $T_k$, denote by $c_i(T_k,j)$ the number of occurences of the symbol $i$ in the $j$th column of $T_k$
and by $r_i(T_k,j)$ the number if occurences of the symbol $i$ in the $j$th row of $T_k$.

Fot the uncolored bitableau $(T_k,T_l)$ of shape $\lambda$, we define
\[C_L(T_k)=\prod_{1\leq j\leq \lambda_1} \prod_{1\leq i\leq m+n} D_L^{c_i(T_k,j)}(\overline{j},i),\]
\[C_P(T_l)=\prod_{1\leq j\leq \lambda'_1} \prod_{1\leq i\leq m+n} D_P^{r_i(T_L,j)}(\underline{j},i),\]
and the Capelli operator $C(T_k,T_l)$ of the bitableau $(T_k,T_l)$ as
\[C(T_k,T_l)=C_L(T_k)\circ C_P(T_l).\]

In light of the previous remarks on effect of polarization operators on symmetrizers, if $C_L(T_k)(T_r)=\sum_t T_{k_t}$ and $(T_s)C_P(T_l)=\sum_u T_{l_u}$, then 
\begin{equation}\label{(1)}
C_L(T_k) T[r:s]=\sum_t T[k_t:s], \qquad T[r:s]C_P(T_l)=\sum_u T[r:l_u]
\end{equation}
and 
\[C(T_k,T_l)T[r:s]=\sum_t\sum_u T[k_t:l_u].\]

Denote by $T_{\overline{\ell}(\lambda)}$ the tableau of shape $\lambda$ such that its $j$th column consists solely of odd symbols $\overline{j}$ for each $j$, and denote by $T_{\underline{\ell}(\lambda)}$ the tableau of shape $\lambda$ such that its 
$j$th row consists solely of even symbols $\underline{j}$ for each $j$.

\subsection{$C(T_k,T_l)T[k:l]\neq 0$ for semistandard $T_k,T_l$.}

\begin{pr}\label{p4.1}
Let $T_k,T_l$ be semistandard tableaux of the shape $\lambda$. Then 
$C_L(T_k)T[k:l]=T[\overline{\ell}(\lambda):l]\neq 0$, $T[k:l]C_P(T_l)=T[k:\underline{\ell}(\lambda)]\neq 0$ and $C(T_k,T_l)T[k:l]=T[\overline{\ell}(\lambda):\underline{\ell}(\lambda)]\neq 0$.
\end{pr}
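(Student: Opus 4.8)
The plan is to reduce everything to the single symmetrizer $T[\overline{\ell}(\lambda):\underline{\ell}(\lambda)]$ and then show that this particular element is nonzero. By the formulas in \eqref{(1)}, it suffices to prove two things: first, that $C_L(T_k)$ applied to the tableau $T_k$ produces exactly the single tableau $T_{\overline{\ell}(\lambda)}$ (and dually for $C_P(T_l)$ on $T_l$), so that $C_L(T_k)T[k:l]=T[\overline{\ell}(\lambda):l]$ and $T[k:l]C_P(T_l)=T[k:\underline{\ell}(\lambda)]$; and second, that $T[\overline{\ell}(\lambda):\underline{\ell}(\lambda)]\neq 0$. Composing the two polarization computations and using that the two families of operators act on opposite sides then gives $C(T_k,T_l)T[k:l]=T[\overline{\ell}(\lambda):\underline{\ell}(\lambda)]$, and the three displayed equalities follow.

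For the first part, fix a column $j$. The factor $\prod_{1\le i\le m+n} D_L^{c_i(T_k,j)}(\overline{j},i)$ replaces, for each symbol $i$, a chosen subset of $c_i(T_k,j)$ of the occurrences of $i$ by the colored symbol $\overline{j}$; since $T_k$ is semistandard, no even symbol is repeated in column $j$, so each $c_i(T_k,j)$ is $0$ or $1$ for even $i$, and for odd $i$ the entries of column $j$ are weakly increasing, so the occurrences of $i$ in that column form a contiguous block. The point is that $c_i(T_k,j)$ is precisely the number of times $i$ occurs in the $j$-th column, so after applying the product over all $i$ \emph{every} entry of the $j$-th column of $T_k$ has been recolored to $\overline{j}$; since the operators for distinct columns involve distinct colors and commute, the net effect of $C_L(T_k)$ on $T_k$ is to recolor the $j$-th column to $\overline{j}$ for every $j$ simultaneously, which is exactly $T_{\overline{\ell}(\lambda)}$. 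Crucially there is only one term in the resulting sum: although $D_L^r(\overline{j},i)$ a priori sums over all $\binom{s}{r}$ choices of which $r$ copies of $i$ to recolor, here $s$ equals the total number of occurrences of $i$ in \emph{all} of $T_k$, so in general there are several terms --- I need to argue that the extra terms, which recolor copies of $i$ lying in columns other than $j$, are killed after the full product because some later factor then finds too few uncolored copies of a symbol, or --- more cleanly --- reorganize the product so that we first polarize column $1$ completely (using the colors $\overline{1}$), which removes those entries from consideration, then column $2$, and so on; after column $1$ is fully recolored, the symbol $i$ occurs in $T_k$ only outside column $1$, and inductively the count $c_i(T_k,j)$ for the remaining columns still matches. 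The dual statement for $C_P(T_l)$ and rows of $T_l$ is obtained by the row/column transpose exactly as in the proof of Lemma~\ref{l4}.

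The main obstacle is the nonvanishing of $T[\overline{\ell}(\lambda):\underline{\ell}(\lambda)]$. Here $T_{\overline{\ell}(\lambda)}$ has $j$-th column constant equal to the odd colored symbol $\overline{j}$, and $T_{\underline{\ell}(\lambda)}$ has $j$-th row constant equal to the even colored symbol $\underline{j}$. I would expand $T[\overline{\ell}(\lambda):\underline{\ell}(\lambda)]=\sum_{\rho\in R(T)}\sum_{\kappa\in C(T)} sgn(\kappa)\,\chi_{\overline{\ell}(\lambda),\,\underline{\ell}(\lambda)*\kappa*\rho}$ and examine which pairs $(\rho,\kappa)$ contribute the same monomial $\bar\chi$ up to sign. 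Because the left index is constant down columns by odd symbols and the right index is constant along rows by even symbols, the monomial $\bar\chi_{\overline{\ell}(\lambda),\underline{\ell}(\lambda)}$ records, for each box in row $a$ and column $b$, the variable $c_{\overline{b}\,\underline{a}}$, and all these variables are distinct since the pairs $(\overline b,\underline a)$ are distinct; so the identity term $(\rho,\kappa)=(1,1)$ contributes $+\bar\chi$ with coefficient $1$ and no cancellation from supercommutativity collapses distinct boxes. One then checks that the only pairs $(\rho,\kappa)$ yielding that same monomial are those with $\kappa\in C(T)$ fixing every column setwise-by-value --- but since the column entries of $\underline{\ell}(\lambda)$ after the row-permutation are forced to be distinct even symbols with no repeats (the colors $\underline 1,\underline 2,\dots$ down a column are distinct), the contributing $\kappa$ are exactly those permuting equal entries, of which there are none nontrivially once we track that each $\underline a$ sits in a distinct box; hence the coefficient of $\bar\chi$ is a strictly positive integer (a product of multinomial coefficients counting the row-stabilizer), in particular nonzero. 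Since we are in characteristic zero this suffices. This pairing/cancellation bookkeeping, together with getting the signs $s(j,\sigma)$ and $sgn(\kappa)$ to line up, is the delicate point; everything else is a direct consequence of \eqref{(1)} and the definitions.
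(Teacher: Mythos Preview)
Your first step contains a genuine gap: it is \emph{not} true that $C_L(T_k)$ applied to the tableau $T_k$ produces the single tableau $T_{\overline{\ell}(\lambda)}$. Take $\lambda=(2,1)$ with $m=2$, $n=0$ and $T_k$ the semistandard tableau with rows $(1,1)$ and $(2)$. Then $c_1(T_k,1)=c_1(T_k,2)=c_2(T_k,1)=1$, and applying $D_L^{1}(\overline{1},1)D_L^{1}(\overline{1},2)D_L^{1}(\overline{2},1)$ to $T_k$ yields two tableaux: $T_{\overline{\ell}(\lambda)}$ (columns $(\overline{1},\overline{1})$ and $(\overline{2})$) and a second tableau with columns $(\overline{2},\overline{1})$ and $(\overline{1})$. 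Both survive the full product; no ``later factor finds too few uncolored copies,'' because $\sum_j c_i(T_k,j)$ equals the total number of occurrences of $i$, so every distribution of the colors $\overline{j}$ among those occurrences survives. Your reorganization argument (b) assumes that after the column-$1$ operators are applied, column $1$ is fully recolored, but this holds only for one of the summands, not all of them.

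The paper handles this differently: it accepts that $C_L(T_k)(T_k)$ is a sum $\sum_t T_{k_t}$ with many terms, observes that every $T_{k_t}$ has the same content as $T_{\overline{\ell}(\lambda)}$, and that $T_{\overline{\ell}(\lambda)}$ is the unique $\unlhd_c$-maximal tableau of that content and the unique semistandard one. Proposition~\ref{p3.2} then forces $T[k_t:l]=0$ for every $T_{k_t}\ne T_{\overline{\ell}(\lambda)}$, because the straightening expresses $T[k_t:l]$ as a combination of $T[q:l]$ with $T_q$ semistandard and $T_q\unlhd_c T_{k_t}\lhd_c T_{\overline{\ell}(\lambda)}$, an empty set. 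So the extra terms die at the level of symmetrizers, not at the level of tableaux; the straightening is essential. Your nonvanishing argument for $T[\overline{\ell}(\lambda):\underline{\ell}(\lambda)]$ is reasonable in outline (and the paper does not spell this out either), but the first half of your proof needs to be replaced by the dominance/straightening argument.
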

\begin{proof}
Denote by $T_{k_1}, \ldots, T_{k_z}$ all possible tableaux corresponding to summands of $C_L(T_k)(T_k)$. One of the summands is $T_{\overline{\ell}(\lambda)}$ and every $T_{k_t}$ has the same content as $T_{\overline{\ell}(\lambda)}$. 

Among the tableaux of shape $\lambda$ and content identical to that of $T_{\overline{\ell}(\lambda)}$, the tableau 
$T_{\overline{\ell}(\lambda)}$ is the maximal element with respect to the preorder $\unlhd_c$. Therefore, 
if $T_{k_t}\neq T_{\overline{\ell}(\lambda)}$, then $T_{k_t}\lhd_c T_{\overline{\ell}(\lambda)}$.
By Proposition \ref{p3.2}, if $T_{k_t}\neq T_{\overline{\ell}(\lambda)}$, then the symmetrizer $T[k_t:l]$ is a linear combination of symmetrizers $T[q:l]$, where $T_q$ is semistandard and $T_q\unlhd_c T_{k_t}\lhd_c  T_{\overline{\ell}(\lambda)}$. Since the only semistandard tableau of the same shape and content as $T_{\overline{\ell}(\lambda)}$ is $T_{\overline{\ell}(\lambda)}$ itself, we conclude that each $T[q:l]=0$ 
and $T[k_t:l]=0$ for $T_{k_t}\neq T_{\overline{\ell}(\lambda)}$. Therefore, 
$C_L(T_k)T[k:l]= T[\overline{\ell}(\lambda):l]$.

Analogously, denote by $T_{\underline{\ell}(\lambda)}=T_{l_1}, \ldots, T_{l_z}$ all possible tableaux corresponding to summands of $C_P(T_l)(T_l)$. Every summand $T_{l_u}$  has the same content as $T_{\underline{\ell}(\lambda)}$. 
Among the tableaux of shape $\lambda$ and the same content as $T_{\underline{\ell}(\lambda)}$, 
the tableau $T_{\underline{\ell}(\lambda)}$is the maximal element with respect to the preorder $\unlhd_r$.
Therefore, $T_{l_u}\neq T_{\underline{\ell}(\lambda)}$ implies $V^k\lhd_r \underline{T}_{\lambda}$.
By Proposition \ref{p3.1}, the symmetrizer $T[k:l_u]$ is a linear combination of symmetrizers $T[k:q]$, where $T_q$ is semistandard and $T_q\lhd_r T_{\underline{\ell}(\lambda)}$. Since the only semistandard tableau of the same shape and content as $T_{\underline{\ell}(\lambda)}$ is $T_{\underline{\ell}(\lambda)}$ itself, we conclude that each 
$T[k:q]=0$ and 
$T[k:l_u]=0$ for $T_{l_u} \neq T_{\underline{\ell}(\lambda)}$. Therefore 
$T[k:l]C_P(T_l)=T[k:T_{\underline{\ell}(\lambda)}]$.

Combining the above, we obtain $C(T_k,T_l)T[k:l]=T[\overline{\ell}(\lambda):\underline{\ell}(\lambda)]\neq 0$.
\end{proof}

\subsection{Triangularity condition for Capelli operators}

We say that a tableau $T_k$ is row-injective, if it does not contains two identical odd entries in the same row.
We say that a tableau $T_k$ is column-injective, if it does not contains two identical even elements in the same column.

The following triangularity condition is a modification of Theorem 4.4 of \cite{cl1}.

\begin{pr}\label{p4.2}
Assume that $T_k,T_l,T_i,T_j$ are semistandard tableaux of shape $\lambda$.
If $C_L(T_k)T[i:j]\neq 0$, then $T_k\unlhd_c T_i$. If $T[i:j]C_P(T_l)\neq 0$, then $T_l\unlhd_r T_j$. 
Thus $C(T_k,T_l)T[i:j]\neq 0$ implies $T_k\unlhd_c T_i$ and $T_l\unlhd_r T_j$. 
\end{pr}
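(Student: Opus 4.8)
The plan is to prove the contrapositive for each of the two polarization statements separately, then combine. First I would analyze $C_L(T_k)T[i:j]$. By the first formula in \eqref{(1)}, if $C_L(T_i)(T_i) = \sum_t T_{k_t}$ then $C_L(T_i)T[i:j] = \sum_t T[k_t:j]$, and Proposition \ref{p4.1} tells us the distinguished summand $T_{\overline{\ell}(\lambda)}$ survives; more to the point, I want to understand what happens when we apply $C_L(T_k)$ to $T[i:j]$ for a \emph{possibly different} semistandard $T_k$. The key observation is that $C_L(T_k)$ replaces each uncolored symbol by colored column-symbols according to the column-statistics of $T_k$: applying $C_L(T_k)$ to the tableau $T_i$ produces a sum of colored tableaux $T_{k_t}$, and the maximal one in the preorder $\unlhd_c$ among those with that content is obtained by placing the colored symbols as high as possible. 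I would show that this maximal colored tableau, call it $T_{k^*}$, is $\lhd_c$-comparable to $T_{\overline{\ell}(\lambda)}$, and crucially that $T_{k^*} = T_{\overline{\ell}(\lambda)}$ forces $T_k \unlhd_c T_i$ — this is where the column-dominance order enters: the colored content of $C_L(T_k)$ is recorded by the numbers $c_i(T_k,j)$, and the requirement that after polarization of $T_i$ we can reach $T_{\overline{\ell}(\lambda)}$ is exactly a statement that the column-filling of $T_k$ is dominated by that of $T_i$.

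In detail, I would argue: suppose $C_L(T_k)T[i:j] \neq 0$. Write $C_L(T_k)(T_i) = \sum_t T_{k_t}$; since the operator just colors symbols, each $T_{k_t}$ has the same content as $T_{\overline{\ell}(\lambda)}$ precisely when the column-multiplicities of $T_k$ can be matched inside the columns of $T_i$, and in general the colored tableaux produced have content refining the column-statistics of $T_k$. By Proposition \ref{p3.2} each $T[k_t:j]$ is a $\mathbb{Q}$-combination of $T[q:j]$ with $T_q$ semistandard and $T_q \unlhd_c T_{k_t}$; by Proposition \ref{p4.1} (applied with the roles giving uniqueness of the semistandard tableau of the colored content $T_{\overline\ell(\lambda)}$) and the analogous uniqueness for other colored contents, the only way $C_L(T_k)T[i:j]$ is nonzero is that some $T_{k_t}$ can be rewritten to involve a nonzero semistandard $T[q:j]$. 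Then I would track the column-statistics: $T_q$ semistandard of the colored content forces $T_q = T_{\overline{\ell}(\lambda)}$, and the chain $T_{\overline{\ell}(\lambda)} = T_q \unlhd_c T_{k_t} \unlhd_c (\text{image of } T_i \text{ under coloring})$, combined with the fact that coloring $T_i$ by $C_L(T_k)$ produces tableaux whose $\unlhd_c$-maximum encodes $T_k$'s columns sitting inside $T_i$'s columns, yields $T_k \unlhd_c T_i$. The symmetric argument with $C_P(T_l)$, Lemma \ref{l3}--\ref{l4}, Proposition \ref{p3.1} and the row-dominance preorder $\unlhd_r$ gives: $T[i:j]C_P(T_l)\neq 0 \implies T_l\unlhd_r T_j$. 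Finally, since $C(T_k,T_l) = C_L(T_k)\circ C_P(T_l)$ and the two operators commute and act on disjoint sides, $C(T_k,T_l)T[i:j]\neq 0$ forces both $C_L(T_k)T[i':j]\neq 0$ for the relevant intermediate and $T[i:j]C_P(T_l)\neq 0$, hence $T_k\unlhd_c T_i$ and $T_l\unlhd_r T_j$.

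The main obstacle I anticipate is making precise the combinatorial claim that ``applying $C_L(T_k)$ to $T_i$ and then reducing to semistandard form can yield a nonzero multiple of $T[\overline{\ell}(\lambda):j]$ only if $T_k\unlhd_c T_i$.'' This requires a careful bookkeeping of how the column-statistics $c_i(T_k,j)$ of $T_k$ constrain which colored tableaux appear in $C_L(T_k)(T_i)$, together with the fact (from Proposition \ref{p4.1}) that among tableaux of a fixed colored content only the $\unlhd_c$-maximal semistandard one survives the Garnir/Carter--Lusztig reduction of Proposition \ref{p3.2}. Concretely, one shows that if $T_k \not\unlhd_c T_i$ then for the first $(q,p)$ where $c_{qp}(k) > c_{qp}(i)$, every colored tableau in $C_L(T_k)(T_i)$ has, in its first $q$ columns, strictly fewer than the ``expected'' number of colored symbols $\overline{1},\dots,\overline{q}$ needed to reach $T_{\overline{\ell}(\lambda)}$ after reduction, so all semistandard rewrites $T[q':j]$ vanish — one uses here that the only semistandard tableau of each relevant colored content is the obvious one, and that a content-deficiency in an initial segment of columns cannot be repaired by the column-Garnir moves, which only move entries between adjacent columns subject to the transversal constraint. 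Once this deficiency lemma is in place, the rest of the argument is the routine dominance-order chase sketched above; I would isolate it as a separate combinatorial lemma before proving Proposition \ref{p4.2}.
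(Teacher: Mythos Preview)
Your approach diverges from the paper's and, as written, has a real gap. The paper does \emph{not} use Proposition~\ref{p3.2} or uniqueness of the semistandard tableau of a given content. Instead it argues as follows: since the polarization operators commute, $C_L(T_k)T[i:j]\neq 0$ forces every \emph{partial} operator
\[
C_{qp}(T_k)=\prod_{j'\le q}\prod_{i'\le p}D_L^{c_{i'}(T_k,j')}(\overline{j'},i')
\]
to satisfy $C_{qp}(T_k)T[i:j]\neq 0$ as well. Now the colored letters $\overline{j'}$ are \emph{odd}, so by Lemma~\ref{l3} the only summands $T[k_t:j]$ that survive are those for which $T_{k_t}$ is row-injective. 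This yields, for each pair $(q,p)$, a purely combinatorial statement about $T_i$: one can simultaneously replace, for all $i'\le p$ and $j'\le q$, exactly $c_{i'}(T_k,j')$ copies of the letter $i'$ in $T_i$ by $\overline{j'}$ so that the result is row-injective. That statement is precisely the hypothesis of Clausen's Theorem~4.4 in \cite{cl1}, which then delivers $T_k\unlhd_c T_i$ directly. The dual implication uses Lemma~\ref{l1} (column-injectivity) in place of Lemma~\ref{l3}.

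Your route via Proposition~\ref{p3.2} does establish that $C_L(T_k)T[i:j]$ is a scalar multiple of $T[\overline{\ell}(\lambda):j]$ (once one notes, as you do not quite make explicit, that $C_L(T_k)(T_i)\neq 0$ forces $T_i$ and $T_k$ to have identical content, so every $T_{k_t}$ has the content of $T_{\overline{\ell}(\lambda)}$). But from there you never explain why nonvanishing of that scalar forces $T_k\unlhd_c T_i$. Your chain ``$T_{\overline{\ell}(\lambda)}=T_q\unlhd_c T_{k_t}$'' combined with maximality of $T_{\overline{\ell}(\lambda)}$ would force $T_{k_t}\sim_c T_{\overline{\ell}(\lambda)}$ for any contributing $T_{k_t}$; pushed further this says $T_{\overline{\ell}(\lambda)}$ must literally occur among the $T_{k_t}$, which would give $c_{i'}(T_k,j')=c_{i'}(T_i,j')$ for all $i',j'$ and hence $T_k=T_i$, far stronger than what is claimed and clearly not what one wants. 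The missing ingredient is precisely the row-injectivity constraint from Lemma~\ref{l3}, applied not to the full operator but to the partial operators $C_{qp}$: that is what converts nonvanishing into a family of packing conditions indexed by $(q,p)$, and it is that family that Clausen's theorem identifies with $T_k\unlhd_c T_i$. Your proposed ``deficiency lemma'' is groping toward this, but note that Garnir straightening moves entries between columns, so a shortfall of colored symbols in an initial block of columns of a single $T_{k_t}$ does \emph{not} by itself force $T[k_t:j]=0$; you need the row-injectivity obstruction, not the semistandard reduction, to kill terms.
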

\begin{proof}
We show that $C_L(T_k)T[i:j]\neq 0$, then $T_k\unlhd_c T_i$. 
For an ordered pair $(q,p)$ of natural numbers we define the `$(q,p)$' part of $C_L(T_k)$ as
\[C_{qp}(T_k)=\prod_{j\leq q}\prod_{i\leq p} D_L^{c_i(T_k,j)}(\overline{j},i).\]
Since all polarization operators commute, the assumption  $C_L(T_k)T[i:j]\neq 0$ implies
$C_{qp}(T_k)T[i:j]\neq 0$ for all $(q,p)$.

Using (\ref{(1)}) and Lemma \ref{l3} we infer that only symmetrizers $T[k_t:j]$ of row-injective tableaux $T_{k_t}$ will contribute non-zero terms to $C_{qp}(T_k)T[i:j]$. Therefore the tableau $T_k$ has the following property:
It is possible to find simultaneously for all pairs $(i,j)$, where $i\leq q$ and $j\leq p$, $c_i(T_k,j)$-times the
letter $i$ in $T_k$ such that, after having replaced these is by $\overline{j}$s, we obtain a row-injective tableau $T_{qp}$.

Word-by-word repetition of the arguments from the proof of Theorem 4.4 of \cite{cl1} implies $T_k\unlhd_c T_i$.

The statement $T[i:j]C_P(T_l)\neq 0$ implies $T_l\unlhd_r T_j$ is proved symmetrically using Lemma \ref{l1} instead of Lemma \ref{l3}, and utilizing column-injective tableaux instead of row-injective tableuax. 

The last implication is now obvious.
\end{proof}

\subsection{Linear independence of $T[i:j]$ for semistandard $T_i,T_j$}

\begin{pr}\label{p4.3}
The symmetrizers $T[i:j]$ for semistandard tableaux $T_i,T_j$ of shape $\lambda$ are linearly independent.
\end{pr}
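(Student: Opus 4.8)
The plan is to deduce linear independence from the triangularity machinery of Capelli operators developed in Propositions \ref{p4.1} and \ref{p4.2}. Suppose we have a relation $\sum_{(i,j)} a_{ij} T[i:j] = 0$, where the sum runs over pairs of semistandard tableaux $T_i, T_j$ of shape $\lambda$ and the coefficients $a_{ij} \in \mathbb{Q}$ are not all zero. The goal is to apply a well-chosen Capelli operator $C(T_k, T_l)$ to this relation and use triangularity to isolate a single surviving term, forcing the corresponding coefficient to vanish — a contradiction.

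The key steps, in order: First, introduce a total order refining the preorder $\unlhd_c \times \unlhd_r$ on pairs of semistandard tableaux (for instance, order first by $\unlhd_c$ on the left tableau, breaking ties by $\unlhd_r$ on the right tableau, and break remaining ties arbitrarily). Among all pairs $(i,j)$ with $a_{ij} \neq 0$, choose one, say $(k,l)$, that is minimal with respect to this order. Second, apply the Capelli operator $C(T_k, T_l)$ to the relation $\sum a_{ij} T[i:j] = 0$, obtaining $\sum_{(i,j)} a_{ij}\, C(T_k,T_l)T[i:j] = 0$. Third, invoke Proposition \ref{p4.2}: the term $C(T_k,T_l)T[i:j]$ can be nonzero only if $T_k \unlhd_c T_i$ and $T_l \unlhd_r T_j$. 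By minimality of $(k,l)$ among the pairs with nonzero coefficient, the only such pair appearing with a nonzero coefficient is $(k,l)$ itself — here one should be slightly careful: there could in principle be pairs $(i,j) \ne (k,l)$ with $a_{ij} \ne 0$, $T_k \unlhd_c T_i$, $T_l \unlhd_r T_j$ but with $(i,j)$ not comparable to $(k,l)$ in the chosen total order; the fix is to choose the total order so that $T_k \unlhd_c T_i$ and $T_l \unlhd_r T_j$ together imply $(k,l) \le (i,j)$ in the total order, which is automatic if the total order is built to refine $\unlhd_c \times \unlhd_r$ componentwise in the lexicographic fashion described. Fourth, apply Proposition \ref{p4.1}: $C(T_k,T_l)T[k:l] = T[\overline{\ell}(\lambda):\underline{\ell}(\lambda)] \neq 0$. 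Hence the whole relation collapses to $a_{kl}\, T[\overline{\ell}(\lambda):\underline{\ell}(\lambda)] = 0$, forcing $a_{kl} = 0$, a contradiction.

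I expect the main obstacle to be the bookkeeping around the total order: one must verify that the minimal pair $(k,l)$ genuinely survives alone after applying $C(T_k,T_l)$, i.e. that no other pair $(i,j)$ with nonzero coefficient satisfies both $T_k \unlhd_c T_i$ and $T_l \unlhd_r T_j$. This requires that the chosen refinement of the preorder be compatible with the product preorder $\unlhd_c \times \unlhd_r$, so that these two domination conditions force $(i,j) \ge (k,l)$, and then minimality of $(k,l)$ (combined with $(i,j) \ne (k,l)$) would be contradicted — wait, that is the wrong direction, so instead one picks $(k,l)$ to be a \emph{maximal} element among pairs with nonzero coefficient, relative to a total order refining $\unlhd_c \times \unlhd_r$; then $T_k \unlhd_c T_i$ and $T_l \unlhd_r T_j$ with $a_{ij} \ne 0$ force $(i,j) \ge (k,l)$, hence $(i,j) = (k,l)$ by maximality. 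Getting this direction right is the only subtle point; everything else is a direct citation of Propositions \ref{p4.1} and \ref{p4.2}. A secondary routine check is that $C(T_k,T_l)$ is a well-defined operator on $A_\lambda$ commuting past the finite sum, which is immediate from linearity of the polarization operators as noted in \ref{4.1}.

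\begin{proof}
Fix a total order $\preceq$ on the set of ordered pairs $(T_i,T_j)$ of semistandard tableaux of shape $\lambda$ that refines the product preorder: whenever $T_k\unlhd_c T_i$ and $T_l\unlhd_r T_j$ we have $(T_k,T_l)\preceq (T_i,T_j)$. (Such a total order exists; for example, order pairs first by $\unlhd_c$ on the left component, then by $\unlhd_r$ on the right component, then arbitrarily within ties.)

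Suppose, for contradiction, that $\sum_{(i,j)} a_{ij}\,T[i:j]=0$ is a nontrivial $\mathbb{Q}$-linear relation, the sum running over pairs of semistandard tableaux $T_i,T_j$ of shape $\lambda$. Among all pairs $(i,j)$ with $a_{ij}\neq 0$, let $(k,l)$ be the $\preceq$-maximal one. Applying the Capelli operator $C(T_k,T_l)$ to the relation and using that polarization operators are linear, we obtain
\[
0=C(T_k,T_l)\Big(\sum_{(i,j)} a_{ij}\,T[i:j]\Big)=\sum_{(i,j)} a_{ij}\,C(T_k,T_l)T[i:j].
\]
By Proposition \ref{p4.2}, the term $C(T_k,T_l)T[i:j]$ is nonzero only when $T_k\unlhd_c T_i$ and $T_l\unlhd_r T_j$, which by the choice of $\preceq$ forces $(k,l)\preceq (i,j)$. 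Since $(k,l)$ is $\preceq$-maximal among pairs with nonzero coefficient, the only such pair is $(i,j)=(k,l)$. Hence the sum reduces to
\[
0=a_{kl}\,C(T_k,T_l)T[k:l].
\]
By Proposition \ref{p4.1}, $C(T_k,T_l)T[k:l]=T[\overline{\ell}(\lambda):\underline{\ell}(\lambda)]\neq 0$, so $a_{kl}=0$, contradicting the choice of $(k,l)$. Therefore no nontrivial relation exists, and the symmetrizers $T[i:j]$ for semistandard $T_i,T_j$ are linearly independent.
\end{proof}
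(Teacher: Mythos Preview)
Your proof is correct and follows essentially the same approach as the paper's own argument: apply a Capelli operator corresponding to an extremal pair, use the triangularity of Proposition~\ref{p4.2} to kill all other terms, and use Proposition~\ref{p4.1} to see the surviving term is nonzero. The only cosmetic difference is that the paper selects the extremal pair by a two-step choice (first extremize the left tableau, then the right), whereas you package this more cleanly by choosing a single total order on pairs refining the product preorder $\unlhd_c\times\unlhd_r$ and taking the $\preceq$-maximal pair; both devices serve the same purpose.
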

\begin{proof}
Recall that $T_i\unlhd_c T_j$ if and only if $T_j\unlhd_r T_i$.
Also, there is a linear order $\leq $ on the set of semistandard tableaux of shape $\lambda$ that refines column-dominance preorder $\unlhd_c$, and its opposite order refines the row-dominance preorder $\unlhd_r$. 

Assume there is a nontrivial dependency relation 
\[\sum_{T_i,T_j \text{ semistandard}} c_{T_iT_j}T[i:j]=0.\]
Choose $T_i$ minimal such that $c_{T_iT_l}\neq 0$ for some $T_l$, and choose $T_j$ maximal such that $c_{T_iT_j}\neq 0$.
By Proposition \ref{p4.2}, if we apply the Capelli operator $C(T_i,T_j)$ to the above dependency relation, we obtain 
$c_{T_iT_j}C(T_i,T_j)T[i:j]=0$. Using Proposition \ref{p4.1} we infer 
$c_{T_iT_j}T[\overline{\ell}(\lambda):\underline{\ell}(\lambda)]=0$, which is a contradiction. 
\end{proof}

\subsection{The $S$-superbimodule $A_{\lambda}$}

By Theorem 12.1 of \cite{brini}, we have the decomposition \[A(m|n,r)=\bigoplus_{\lambda \vdash r} A_{\lambda}\] as $S$-superbimodules. Therefore, to understand the $S$-superbimodule structure of $A(m|n,r)$, it is enough to describe the structure of the component $A_{\lambda}$.

Recall the definition of tableau $T_\ell$ from \cite{mz}: Each row of $T_{\ell}$ of index $1\leq i\leq m$ is filled with identical entries $i$. The remaining part of the tableau is such that its $j$th column is filled with entries $m+j$ for $1\leq j\leq n$.
Since $\lambda$ is an $(m|n)$-hook partition, this described the tableau $T_\ell$ uniquely.

\begin{lm}
\label{l5} For a fixed $i$, the $K$-space spanned by $T[i:j]$ for
all $j$ is isomorphic to the left supermodule $D_{\lambda}$. Analogously,
for a fixed $j$, the $K$-space spanned by $T[i:j]$ for all $i$ is
isomorphic to the right supermodule $D_{\lambda}^{}$.
\end{lm}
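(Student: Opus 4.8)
The plan is to identify, for fixed $i$, the left $S$-module map from the ``abstract'' irreducible $D_\lambda$ onto the span of the $T[i:j]$, and likewise for fixed $j$ on the right. I would first record the action formulas already derived in Section~3, namely
\[
\xi_{u,v}T^{\lambda}[i:j]=\sum_{a\in I(m|n,r)}\xi_{u,v}(\chi_{a,j})T^{\lambda}[i:a],
\]
which shows at once that $W_i:=\operatorname{span}_K\{T[i:j]\mid j\in I(m|n,r)\}$ is a left $S$-submodule of $A_\lambda$. The goal is then to prove $W_i\simeq D_\lambda$ as a left $S$-module (and the symmetric statement for $V_j:=\operatorname{span}_K\{T[i:j]\mid i\}$ as a right $S$-module).

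\smallskip

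The key steps, in order: (1) Observe $W_i\neq 0$: indeed, taking $T_i,T_j$ semistandard, Proposition~\ref{p4.1} gives $C(T_k,T_l)T[i:j]=T[\overline{\ell}(\lambda):\underline{\ell}(\lambda)]\neq 0$, so already $T[i:j]\neq 0$ for any fixed $i$ with semistandard representative, hence $W_i\neq 0$ for all $i$ by Proposition~\ref{p3.2} (every $T[i:j]$ is a $\mathbb{Q}$-combination of semistandard ones, and conversely the semistandard $T[i':j]$ lie in $W_{i'}$; more directly, $W_i\supseteq$ the span of all $T[i':j]$ with $T_{i'}\unlhd_c T_i$ need not hold, so instead argue: for $T_i$ semistandard $W_i\ni T[i:j]\neq0$, and for general $i$ use that the $*$-action relates $T[i:j]$ to $T[i':j]$ up to sign when $i\sim i'$, reducing to the semistandard-content case). (2) Since $W_i$ is a nonzero left $S$-module and $A_\lambda$ carries only composition factors $L_S(\mu)$ with $\mu\unlhd\lambda$ — this follows from the bideterminant/filtration picture recalled in the introduction together with Theorem~\ref{t3.1}, which expresses every symmetrizer through semistandard ones of shape $\lambda$ — the top composition factor must be $L_S(\lambda)=D_\lambda$. (3) Exhibit a highest-weight vector: $T[i:\ell]$, where $T_\ell=T_{\ell(\lambda)}$ is the canonical tableau, is (up to a nonzero scalar) a weight vector of weight $\lambda$ that is annihilated by the positive part of $S$; the computation is the superanalogue of the classical bideterminant computation and uses Lemma~\ref{l1} (identical even entries in a column kill it) to see that raising operators produce tableaux with repeated column entries. (4) Conclude $W_i$ is generated by this highest-weight vector, hence is a highest-weight module of highest weight $\lambda$; combined with (2) it has a unique maximal submodule with quotient $D_\lambda$, but since all its factors are $\unlhd\lambda$ and it is generated in weight $\lambda$ with one-dimensional $\lambda$-weight space, $W_i$ is in fact irreducible $\simeq D_\lambda$. (Alternatively, and more cheaply: $\dim W_i\le$ the number of semistandard $T_j$ by Proposition~\ref{p3.1}, which equals $\dim D_\lambda$ by the super Weyl/Sergeev character formula, while the surjection $D_\lambda\twoheadrightarrow$ top of $W_i$ forces $\dim W_i\ge \dim D_\lambda$; equality gives $W_i\simeq D_\lambda$.) The right-module statement for $V_j$ is proved symmetrically, using Proposition~\ref{p3.2}, Lemma~\ref{l3}, and the right action formula $T^\lambda[i:j]\xi_{u,v}=\sum_b \chi_{u,v}(\chi_{i,b})T^\lambda[b:j]$.

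\smallskip

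I expect the main obstacle to be step (3)–(4): verifying that $T[i:\ell]$ really is a highest-weight vector for the Schur superalgebra and that it generates, i.e. controlling the $S$-action precisely enough in the super setting where signs from the $*$-action intervene. The cleanest route around a long direct computation is to lean on the dimension count: the semistandard symmetrizers $T[i:j]$ span $A_\lambda$ (Theorem~\ref{t3.1}) and are linearly independent (Proposition~\ref{p4.3}), so $\dim A_\lambda=(\#\{\text{ssyt }\lambda\})^2=(\dim D_\lambda)^2$; since each $W_i$ is a quotient of a highest-weight module with factors $\unlhd\lambda$ and $A_\lambda=\sum_i W_i$ decomposes compatibly, a Wedderburn/semisimplicity argument in characteristic zero (where $S$ is semisimple and $D_\lambda$ is projective-injective) forces each $W_i\simeq D_\lambda$ on the nose. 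I would present the semisimple shortcut as the primary argument and only sketch the highest-weight-vector verification as a remark, since in characteristic zero $A_\lambda$ is a semisimple $S$-bimodule all of whose simple constituents are $D_\lambda\otimes D_\lambda^o$, and a nonzero left submodule with all factors $\unlhd\lambda$ lying inside such a bimodule is automatically a direct sum of copies of $D_\lambda$; being moreover a cyclic image of weight-$\lambda$ data pins it to a single copy.
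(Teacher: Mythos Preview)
Your proposal is correct, but the route you present as primary (Capelli operators, composition-factor bounds, Wedderburn) is considerably heavier than the paper's argument, while the route you relegate to a parenthetical ``alternatively'' is essentially what the paper does.

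The paper's proof is a short dimension count. First, for the special multiindex $i=\ell$ (the canonical tableau), it observes that $T[\ell:j]$ is a nonzero scalar multiple of the bideterminant $T^\lambda(\ell:j)$ of \cite{mz}, and simply quotes Proposition~3.5 there to conclude that the $T[\ell:j]$ for semistandard $T_j$ form a basis of $D_\lambda$; in particular $\dim D_\lambda$ equals the number of semistandard tableaux of shape $\lambda$. Second, for arbitrary fixed $i$, the span $W_i$ is a left $S$-supermodule containing the weight-$\lambda$ vector $T[i:\ell]$, which is automatically highest weight since by Proposition~\ref{p3.1} the module is spanned by semistandard $T[i:l]$, all of whose weights are $\unlhd\lambda$. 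In characteristic zero the cyclic submodule generated by a highest-weight vector of weight $\lambda$ is $D_\lambda$, so $\dim W_i\ge\dim D_\lambda$; Proposition~\ref{p3.1} gives $\dim W_i\le\dim D_\lambda$, whence $W_i\simeq D_\lambda$. The right-module statement is symmetric.

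What you do differently: you obtain $\dim D_\lambda=\#\{\text{ssyt}\}$ from the Sergeev character formula rather than from \cite{mz}, and you front-load a semisimplicity/Wedderburn argument that is valid but unnecessary. Your step~(2), that all composition factors of $A_\lambda$ satisfy $\mu\unlhd\lambda$, is not something the paper has established at this point for the super case (the filtration in the introduction is for $S(m,r)$), so leaning on it here is shaky; fortunately your ``alternative'' bypasses it entirely. I would invert your presentation: lead with the highest-weight vector plus dimension bound, and drop the Capelli/Wedderburn discussion, which belongs (and already appears) in the proof of Proposition~\ref{p4.3} and Theorem~\ref{t4.1}.
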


\begin{proof}
Recall the bideterminant $T^{\lambda}(i:j)$ from
Definition 3.3. of \cite{mz}. Since $T[\ell:j]$ is a nonzero scalar multiple of 
$T^{%
\lambda}(\ell:j)$, Proposition 3.5 of \cite{mz} implies that $D_{\lambda}$ has
a basis consisting of $T[\ell:j]$ for semistandard $T_j$.

The $K$-space spanned by $T[i:j]$ for fixed $i$ is a left $S$%
-supermodule with the highest vector $\chi_{i,\ell}$ of weight $\lambda$.
Since by Proposition \ref{p2}, its dimension is the same as that of $D_{\lambda}$%
, it is isomorphic to $D_{\lambda}$ as a left $S$-supermodule.

Analogously, $D_{\lambda}^{o}$ has a basis consisting of $T[i:\ell]$
for semistandard $T_i$, and the $K$-space spanned by $T[i:j]$ for
fixed $j$ is a right $S$-supermodule with the highest vector $\chi_{\ell,j}$
of weight $\lambda$ that is isomorphic to $D_{\lambda}^{o}$ as a right $S$%
-supermodule.
\end{proof}

\begin{tr}
\label{t4.1} The space $A_{\lambda}$ is a $S$-superbimodule and it has a
basis consisting of symmetrizers $T^{\lambda}[i:j]$ for $T_i$, $T_j$
semistandard. The space $A_{\lambda}$ is isomorphic to the $S$%
-superbimodule $D_{\lambda}\otimes D^o_{\lambda}$.
\end{tr}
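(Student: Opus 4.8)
The plan is to assemble Theorem \ref{t4.1} from the pieces already in place. First I would verify that the symmetrizers $T^{\lambda}[i:j]$ for $T_i,T_j$ semistandard span $A_{\lambda}$: by definition $A_{\lambda}$ is the $K$-span of \emph{all} symmetrizers $T^{\lambda}[i:j]$, and Theorem \ref{t3.1} says each such symmetrizer is a $\mathbb{Q}$-linear combination (hence, since we are in characteristic zero and all coefficients lie in the prime field, a $K$-linear combination) of symmetrizers $T^{\lambda}[k:l]$ with $T_k,T_l$ semistandard. Combined with Proposition \ref{p4.3}, which gives linear independence of exactly this set, we conclude that $\{T^{\lambda}[i:j] : T_i,T_j \text{ semistandard}\}$ is a basis of $A_{\lambda}$. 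That $A_{\lambda}$ is an $S$-superbimodule was already observed in Section 3 (the action formulas show $\xi_{u,v}T^{\lambda}[i:j]$ and $T^{\lambda}[i:j]\xi_{u,v}$ stay inside the span).

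It remains to identify $A_{\lambda}$ with $D_{\lambda}\otimes_K D^o_{\lambda}$ as $S$-superbimodules. Here I would use Lemma \ref{l5}: for fixed $i$ the span of $T^{\lambda}[i:j]$ over all $j$ is $\cong D_{\lambda}$ as a left $S$-supermodule, and for fixed $j$ the span of $T^{\lambda}[i:j]$ over all $i$ is $\cong D^o_{\lambda}$ as a right $S$-supermodule. Using the semistandard basis just established, every element of $A_{\lambda}$ is uniquely $\sum_{T_i,T_j \text{ ss}} c_{ij} T^{\lambda}[i:j]$, so the bilinear map $D_{\lambda}\times D^o_{\lambda}\to A_{\lambda}$ sending the basis pair corresponding to semistandard $T_i$ (in $D^o_{\lambda}$, via $T^{\lambda}[i:\ell]$) and semistandard $T_j$ (in $D_{\lambda}$, via $T^{\lambda}[\ell:j]$) to $T^{\lambda}[i:j]$ induces a $K$-linear isomorphism $D_{\lambda}\otimes_K D^o_{\lambda}\xrightarrow{\sim} A_{\lambda}$. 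One then checks this map is a homomorphism of $S$-superbimodules: the left $S$-action on $A_{\lambda}$ only affects the first index (the action formula $\xi_{u,v}T^{\lambda}[i:j]=\sum_a \xi_{u,v}(\chi_{a,j})T^{\lambda}[i:a]$ — wait, that is the right pattern; the left action $\xi_{u,v}T^{\lambda}[i:j]=\sum_b\chi_{u,v}(\chi_{i,b})T^{\lambda}[b:j]$ affects only $i$), matching the action on the $D^o_{\lambda}$-tensor-factor, and dually for the right action, so compatibility is formal once the indices are tracked with the correct Koszul signs.

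The main obstacle I anticipate is the bookkeeping of signs and of the left/right conventions in the superbimodule identification: $D_{\lambda}$ is a \emph{left} module but its symmetrizers $T^{\lambda}[\ell:j]$ vary the \emph{second} index, while $D^o_{\lambda}$ is a \emph{right} module whose symmetrizers $T^{\lambda}[i:\ell]$ vary the \emph{first} index, so the tensor factors appear in the ``crossed'' order $D_{\lambda}\otimes D^o_{\lambda}$ and one must be careful that the $S$-superbimodule action on $D_{\lambda}\otimes_K D^o_{\lambda}$ (left $S$ acting on $D_{\lambda}$, right $S$ acting on $D^o_{\lambda}$) really does correspond, under the map above, to the left and right actions on $A_{\lambda}$ — together with the parity signs $\chi_{i,j}=\pm\bar\chi_{i,j}$ that enter the coefficients. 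Once the dictionary between a semistandard bitableau $(T_i,T_j)$ and the pure tensor $T^{\lambda}[\ell:j]\otimes T^{\lambda}[i:\ell]$ (up to the appropriate scalar coming from $T^{\lambda}[\ell:\ell]$) is fixed, everything reduces to the action formulas recorded at the start of Section 3, and no new computation is needed.
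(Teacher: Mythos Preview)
Your proposal is correct and follows exactly the paper's approach: spanning via Theorem~\ref{t3.1}, linear independence via Proposition~\ref{p4.3}, and the superbimodule identification via Lemma~\ref{l5} (the paper's proof is just these three citations). One small slip to fix when you write it up: the paper's action formulas have the \emph{left} action $\xi_{u,v}T^{\lambda}[i:j]=\sum_a \xi_{u,v}(\chi_{a,j})T^{\lambda}[i:a]$ varying the second index $j$ (consistent with Lemma~\ref{l5}'s ``fixed $i$ gives $D_\lambda$''), and the right action varying $i$ --- your self-correction reversed these.
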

\begin{proof}
Theorem \ref{t3.1} states that every $T^{\lambda}[i:j]$ is a $\mathbb{Q}$-linear
combination of $T^{\lambda}[k:l]$ for $T_k, T_l$ semistandard.

Proposition \ref{p4.1} shows that the symmetrizers $T^{\lambda}[k:l]$ for $T_k, T_l$ semistandard are linearly
independent.

The isomorphism $A_{\lambda}\simeq D_{\lambda}^o\otimes D_{\lambda}$ as a $S$%
-superbimodule follows from Lemma \ref{l5}.
\end{proof}

\section{Modular reduction of symmetrizers}

Let us describe the process of the modular reduction for Schur superalgebras.
Assume that $K$ is a field of positive characteristic different from 2.
The algebra $S(m|n,r)_{\mathbb{Q}}$ defined over the ground field $\mathbb{Q}$ has a basis 
consisting of elements $\xi_{ij}$ for $i,j,\in I(m|n,r)$. Denote by $S(m|n,r)_{\mathbb{Z}}$ 
the $\mathbb{Z}$-module generated by these elements. Then $S(m|n,r)_{\mathbb{Z}}$ is multiplicatively
closed, and $S(m|n,r)_K=S(m|n,r)_{\mathbb{Z}}\otimes_{\mathbb{Z}} K$ is the $K$-algebra with $K$-basis 
consisting of elements $\xi_{i,j}\otimes 1_K$.

By a $\mathbb{Z}$-form of the superbimodule $V_{\lambda,\mathbb{Q}}$ we mean a 
nonzero $\mathbb{Z}$-
subsupermodule $V_{\lambda,\mathbb{Z}}$ 
of $V_{\lambda,\mathbb{Q}}$ that is closed under the left and right action of $S(m|n,r)_{\mathbb{Z}}$. 

If $V_{\lambda,\mathbb{Z}}$ is a $\mathbb{Z}$-form of $V_{\lambda,\mathbb{Q}}$, then 
the superbimodule $V_{\lambda, K}=V_{\lambda, \mathbb{Z}}\otimes K$ over $S(m|n,r)_K$ is said to be obtained by
modular reduction from $V_{\lambda,\mathbb{Q}}$. An important property of the modular reduction is that 
regardless of the choice of a $\mathbb{Z}$-form $V_{\lambda, \mathbb{Z}}$, the multiplicities of simple 
modules as composition factors of $V_{\lambda, K}$ remain the same.

\subsection{Modified symmetrizers}

We are going to modify our previous definition of symmetrizers.
For a tableau $T_j$, denote by $m_{ir}(j)$ the number of occurences of the
symbol $1\leq i\leq m$ in the $r$th row of $T_i$, and by $n_{id}(j)$ the number of
occurences of the symbol $m+1\leq i\leq m+n $ in the $d$-th column of $T_j$. Further, define 
\[r(T_j)=\prod_{i=1}^m\prod_r m_{ir}(j)! \text{ and } c(T_j)=\prod_{i=m}^{m+n}\prod_d  n_{i,d}(j)!.\]

\begin{df}
Let $\lambda$ be a partition of $r$ and $T$ be a basic tableau of shape $\lambda$.
For $i,j\in I(m|n,r)$ define the {\it modified symmetrizer}
\begin{equation}
\label{5}
T\{i:j\}=\frac{1}{r(T_i)c(T_j)}T[i:j].
\end{equation}
\end{df}

If $\kappa\in C(T)$ permutes only identical entries of $T_j$ from the set $\{m+1, \ldots, m+n\}$, then  $\chi_{i,j*\kappa}=\chi_{i,j}$. 
If $\rho\in R(T)$ permutes only identical entries of $T_i$ from the set $\{1, \ldots, m\}$, then  $\chi_{i*\rho,j}=\chi_{i,j}$. 
Therefore, all coefficients in $T\{i:j\}$ are 
integers.

\subsection{$\mathbb{Z}$-form of $A_{\lambda, \mathbb{Q}}$}

Let us recall some concepts from \cite{bk,lz}; 
for details consult these sources.
Let ${\rm Dist}({\rm GL}(m|n))$ be an algebra of distributions of the general linear supergroup ${\rm GL}(m|n)$.
The Kostant $\mathbb{Z}$-form of $U(\mathfrak{g})_{\mathbb{Z}}$ is defined as a
$\mathbb{Z}$-subalgebra of $U(\mathfrak{g})_{\mathbb{C}}$ generated by elements 
\[\binom{e_{qq}}{k}=\frac{e_{qq}(e_{qq}-1)\dots (e_{qq}-k+1)}{k!}\]
for all $k>0$ together with elements $e_{pq}^{(t)}=\frac{e_{pq}^t}{t!}$, where $p\neq q$ for all $t>0$ in the case $|q|+|p|$ is even, 
and $e_{pq}$, where $p\neq p$ in the case $|q|+|p|$ is odd. 
Here $e_{pq}\in {\rm Dist}_1({\rm GL}(m|n))$ is 
such that $e_{pq}(\chi_{ij})=\delta_{pi}\delta_{qj}$ and $e_{pq}(1)=0$.

The left superderivation $D_{pq}$ and the right superderivation $_{pq}D$ of $A(m|n)$ are defined by
$D_{pq}(c_{kl})=\delta_{qk}c_{pl}$ and $(c_{kl})_{pq}D=\delta_{lp}c_{kq}$, respectively.

For $i\in I(m|n,r)$ and integers $1\leq p \neq q \leq m+n$, denote 
by $i|_{p\stackrel{t}{\to} q}$ an element of $I(m|n,r)$ that is obtained 
from $i$ by replacing exactly $t$ occurrences of $p$ in $i$ by $q$.

\begin{lm}\label{l42}
For any $i,j\in I(m|n,r)$ and any $\pi\in S_r$, the following formulae hold:
\[(\chi_{i, j*\pi}){_{p q}}D^{(t)}=\sum_{j|_{p\stackrel{t}{\to} q}}
(-1)^{s(j|_{p\stackrel{t}{\to} q} , j)}\chi_{i, j|_{p\stackrel{t}{\to}
q}*\pi}, {\rm and}\]
\[D^{(t)}_{pq} (\chi_{i*\pi, j})=\sum_{i|_{q\stackrel{t}{\to} p}}(-1)^{s(i|_{q\stackrel{t}{\to} p} , i)}
\chi_{i|_{q\stackrel{t}{\to} p}*\pi, j}.\]
\end{lm}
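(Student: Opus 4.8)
The plan is to prove the first identity directly from the definitions of the right superderivation $_{pq}D$, its divided powers $_{pq}D^{(t)}$, and the sign-twisted monomials $\chi_{i,j}$; the second identity will follow by an entirely symmetric argument using $D_{pq}$ in place of $_{pq}D$, or equivalently by applying the transpose/duality that swaps left and right actions. First I would record that it suffices to prove the formula for $\pi=1$, i.e.\ $(\chi_{i,j})\,_{pq}D^{(t)}=\sum_{j|_{p\stackrel{t}{\to}q}}(-1)^{s(j|_{p\stackrel{t}{\to}q},j)}\chi_{i,j|_{p\stackrel{t}{\to}q}}$, because the $*$-action of $\pi$ on the second index is, by definition, $j*\pi=(-1)^{s(j,\pi)}j\pi$ and the superderivations act on the coalgebra $A(m|n)$, so that $(\chi_{i,j*\pi})\,_{pq}D^{(t)}$ is obtained from $(\chi_{i,j})\,_{pq}D^{(t)}$ by permuting tensor factors with the appropriate Koszul signs; the signs $s(\cdot,\pi)$ appearing after the substitution match up on both sides because replacing $p$ by $q$ affects the parity of a place only when $|p|\neq|q|$, and in that case the change is accounted for uniformly in the definition of $s$. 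I would state this reduction as a short paragraph rather than belabour the bookkeeping.

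With $\pi=1$ fixed, the computation is essentially the graded Leibniz rule. From $(c_{kl})\,_{pq}D=\delta_{lp}c_{kq}$, a single application of $_{pq}D$ to $\bar\chi_{i,j}=c_{i_1j_1}\cdots c_{i_rj_r}$ (up to the global sign relating $\chi$ to $\bar\chi$) replaces one factor $c_{i_sp}$ by $c_{i_sq}$, summed over all positions $s$ with $j_s=p$, each term carrying the Koszul sign produced by moving the odd derivation past the preceding factors. Iterating $t$ times and dividing by $t!$ to form $_{pq}D^{(t)}$, the $t!$ cancels the overcount from the order in which the $t$ chosen positions are hit, leaving a sum over $t$-element subsets of $\{s:j_s=p\}$ — which is exactly the sum over the multiindices $j|_{p\stackrel{t}{\to}q}$. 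The key step is then to check that the accumulated Koszul signs, together with the signs converting between $\chi$ and $\bar\chi$ on the left- and right-hand sides, collapse precisely to the single factor $(-1)^{s(j|_{p\stackrel{t}{\to}q},j)}$, where one should read $s(j',j)$ as counting the inversions created in the odd entries when passing from $j$ to $j'$ (here $j'$ and $j$ are the same sequence except $t$ entries equal to $p$ become $q$, so the relevant count is of pairs of odd positions whose relative order of being "$p$ versus not-$p$" changes).

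The main obstacle I expect is precisely this sign reconciliation: the left side has signs built into $\chi_{i,j}$ via the formula $\chi_{i,j}=(-1)^{\sum_t \deg(i_t)(\cdots)}c_{i,j}$ and further signs from pushing the odd operator $_{pq}D$ through the product, while the right side repackages everything into the compact symbol $(-1)^{s(\cdot,\cdot)}$; making these agree requires carefully tracking which positions are odd before and after the substitution $p\to q$ and noting that odd positions not involved in the substitution contribute equally to both sides and hence cancel. I would handle this by first treating the case $|p|=|q|$ (all signs are trivial, the operator is even, and $s(j|_{p\stackrel{t}{\to}q},j)=0$), then the case $|p|\neq|q|$, and within the latter reduce to $t=1$ by the divided-power recursion $_{pq}D^{(t)}=\frac{1}{t}\,(_{pq}D^{(t-1)})\circ{}_{pq}D$ wherever it is defined, or more cleanly by directly expanding and matching subsets; the single interchange of two tensor slots $c_{i_sp}\leftrightarrow c_{i_{s'}q}$ changing parity is then the only sign to verify, and it is exactly the generating transposition counted by $s$. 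Granting Lemma \ref{l3} and Lemma \ref{l1}-style manipulations from Section 3 is unnecessary here; the lemma is purely a computation inside $\tilde A$ and $A(m|n)$.
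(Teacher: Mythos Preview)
Your plan is essentially correct, but there is nothing in the paper to compare it against: the paper's entire proof is the sentence ``The first formula is taken from Lemma 4.2 of \cite{mz}. The second formula is proved analogously as in the proof of Lemma 4.2 of \cite{mz}.'' So the author simply invokes an earlier joint paper rather than arguing here. Your outline via the graded Leibniz rule---iterate $_{pq}D$, divide by $t!$ to collapse ordered $t$-tuples of replacement positions into $t$-subsets, then reconcile Koszul signs---is exactly the shape the argument in \cite{mz} must take, and your identification of the sign reconciliation as the only nontrivial step is accurate.

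Two small cautions. First, your reduction to $\pi=1$ is phrased loosely: you cannot obtain $\chi_{i,j*\pi}$ from $\chi_{i,j}$ by ``permuting tensor factors'', since only the second index moves. What you really do is apply the $\pi=1$ identity with second index $j\pi$ in place of $j$, then use $(j\pi)|_{p\stackrel{t}{\to}q}=(j|_{p\stackrel{t}{\to}q})\pi$ and verify the sign compatibility $(-1)^{s(j,\pi)}(-1)^{s((j|_{p\stackrel{t}{\to}q})\pi,\,j\pi)}=(-1)^{s(j|_{p\stackrel{t}{\to}q},\,j)}(-1)^{s(j|_{p\stackrel{t}{\to}q},\,\pi)}$; this is routine but is part of the bookkeeping rather than something that can be skipped. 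Second, note that the symbol $s(j',j)$ with two multiindices is not defined in the present paper---it is notation imported wholesale from \cite{mz}---so your reading of it as an odd-inversion count is an educated guess you would want to confirm against that source before writing up.
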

\begin{proof} The first formula is taken from Lemma 4.2 of \cite{mz}. The second formula is proved analogously as in the proof of Lemma 4.2 of \cite{mz}.
\end{proof}

\begin{tr} \label{t5.1}
If $\lambda$ is a $(m|n)$-hook partition of $r$, then 
the $\mathbb{Z}$-span $A_{\lambda,\mathbb{Z}}$ of modified symmetrizers $T\{i:j\}$ is 
a $\mathbb{Z}$-form of the $S(m|n,r)$-superbimodule $A_{\lambda,\mathbb{Q}}$ under the left and right actions of $S(m|n,r)_{\mathbb{Z}}$.
\end{tr}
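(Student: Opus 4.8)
The plan is to verify directly that $A_{\lambda,\mathbb{Z}}$ satisfies the two defining properties of a $\mathbb{Z}$-form: that it is a full $\mathbb{Z}$-lattice in $A_{\lambda,\mathbb{Q}}$ (i.e.\ $A_{\lambda,\mathbb{Z}}\otimes_{\mathbb{Z}}\mathbb{Q}=A_{\lambda,\mathbb{Q}}$), and that it is stable under the left and right actions of $S(m|n,r)_{\mathbb{Z}}$. The lattice property is immediate from the earlier results: by the remark following the definition of modified symmetrizers, each $T\{i:j\}$ has integer coefficients, so $A_{\lambda,\mathbb{Z}}\subseteq A_{\lambda,\mathbb{Q}}$; and since $A_{\lambda,\mathbb{Q}}$ is spanned over $\mathbb{Q}$ by the $T[i:j]$ (hence by the $T\{i:j\}$, which differ by nonzero rational scalars), $A_{\lambda,\mathbb{Z}}$ spans $A_{\lambda,\mathbb{Q}}$ over $\mathbb{Q}$. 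So the real content is $\mathbb{Z}$-stability.

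For stability I would reduce the action of $S(m|n,r)_{\mathbb{Z}}$ to the action of the Kostant-type generators listed before Lemma~\ref{l42}: the divided powers $e_{pq}^{(t)}$ (equivalently, the divided superderivations $D^{(t)}_{pq}$ and ${}_{pq}D^{(t)}$ for $p\neq q$ with $|p|+|q|$ even, and the single operators for $|p|+|q|$ odd) together with the binomial elements $\binom{e_{qq}}{k}$. It suffices to show that each such generator maps each basis element $T\{i:j\}$ into $A_{\lambda,\mathbb{Z}}$, because $S(m|n,r)_{\mathbb{Z}}$ is generated by these (this is the standard Kostant $\mathbb{Z}$-form description, cited from \cite{bk,lz}). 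The diagonal elements $\binom{e_{qq}}{k}$ act on $\chi_{i,j}$, hence on $T\{i:j\}$, by the binomial coefficient $\binom{a_q(j)}{k}$ where $a_q(j)$ is the number of entries of $T_j$ equal to $q$ (for the right action; symmetrically on the left), and these are integers, so those generators are harmless.

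The crux is therefore the action of the divided superderivations. Using Lemma~\ref{l42}, ${}_{pq}D^{(t)}$ sends $\chi_{i,j*\pi}$ to $\sum_{j|_{p\xrightarrow{t}q}}(-1)^{s(\cdots)}\chi_{i,\,j|_{p\xrightarrow{t}q}*\pi}$, and summing over the defining expression for $T[i:j]$ this yields
\begin{equation*}
T[i:j]\,{}_{pq}D^{(t)}=\sum_{j|_{p\xrightarrow{t}q}}(-1)^{s(j|_{p\xrightarrow{t}q},\,j)}\,T[i:\,j|_{p\xrightarrow{t}q}],
\end{equation*}
where the sum runs over the $\binom{s}{t}$ ways of choosing which $t$ copies of $p$ in $T_j$ to replace by $q$ (here $s$ is the number of $p$'s in $T_j$). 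Dividing by $r(T_i)c(T_j)$, the point is to compare $c(T_j)$ with the $c(T_{j'})$ of each resulting tableau $T_{j'}=T_{j|_{p\xrightarrow{t}q}}$ and to check that, after collecting equal tableaux $j'$, the multiplicity of each with which it appears, times $c(T_j)/c(T_{j'})$ and divided by $t!$, is an integer. Concretely: if $|p|+|q|$ is even the operator is $D^{(t)}_{pq}$ and the relevant adjustment involves only the row-factor $r$ when one of $p,q$ is even, or the column-factor $c$ when one is odd, and in each case the number of placements of the $t$ replaced symbols that produce a fixed $T_{j'}$ is exactly the multinomial coefficient that cancels $t!$ against the change in $r(T_{j'})$ or $c(T_{j'})$; if $|p|+|q|$ is odd then $t=1$ and there is nothing to divide. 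I expect this bookkeeping — tracking precisely how the ratio $c(T_j)/c(T_{j'})$ (resp. $r(T_i)/r(T_{i'})$) absorbs the $\binom{s}{t}$ overcounting and the $t!$ from the divided power — to be the main obstacle, and the argument here should parallel the proof of the corresponding statement for bideterminants in \cite{mz} (the analogue using Lemma~4.2 of \cite{mz}, which is exactly our Lemma~\ref{l42}). Once integrality of each generator's action on each $T\{i:j\}$ is established on both sides, closure of $A_{\lambda,\mathbb{Z}}$ under $S(m|n,r)_{\mathbb{Z}}$ follows, completing the proof that it is a $\mathbb{Z}$-form of $A_{\lambda,\mathbb{Q}}$.
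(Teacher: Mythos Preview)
Your proposal is correct and follows essentially the same approach as the paper: reduce to the Kostant generators $\binom{e_{qq}}{k}$, $D_{pq}^{(t)}$, ${}_{pq}D^{(t)}$, dispose of the diagonal ones via content binomials, and for the off-diagonal divided powers use Lemma~\ref{l42} together with the bookkeeping from \cite{mz}. The paper carries out exactly this, writing the explicit column-by-column (resp.\ row-by-row) decomposition $T\{i:j\}\,{}_{pq}D^{(t)}=\sum_{l_1+\cdots+l_{\lambda_1}=t}\pm\prod_k\binom{n_{j,k,q}+l_k}{l_k}T\{i:j(l_1,\ldots,l_{\lambda_1})\}$, which is precisely the bookkeeping you anticipate.
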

\begin{proof} Modifying the proof of Theorem 1 of \cite{mz}, we write
\[\begin{aligned}&T\{i :j\}{_{p q}}D^{(t)}=\\
&\sum\limits_{\substack{0\leq l_1\leq n_{j,1, p}, \ldots , 0\leq l_{\lambda_1}\leq n_{j,\lambda_1, p} \\
l_1+\ldots +l_{\lambda_1}=t}}
\prod_{1\leq k\leq\lambda_1}(-1)^{s(j|_{p\stackrel{t}{\to} q} , j)}\binom{n_{j,i,q}+l_k}{l_k}T\{i : j(l_1 ,\ldots , \ l_{\lambda_1})\},
\end{aligned}\]
where 
$T_{j(l_1 ,\ldots , \ l_{\lambda_1})}$ is obtained from $T_j$ by
replacing the $l_k$ topmost occurrences of $p$ in $i$-th column of
$T_j$ by $q$ for $1\leq k\leq\lambda_1$.

Analogously, 
\[\begin{aligned}&D^{(t)}_{p q}T \{i :j\}=\\
&\sum\limits_{\substack{0\leq l_1\leq n_{j,1, p}, \ldots , 0\leq l_{\lambda'_1}\leq n_{j,\lambda'_1, p} \\
l_1+\ldots +l_{\lambda'_1}=t}}
\prod_{1\leq k\leq\lambda'_1}(-1)^{s(i|_{q\stackrel{t}{\to} p} , i)}\binom{m_{i,j,q}+l_k}{l_k}T\{i(l_1 ,\ldots , \ l_{\lambda'_1}):j\},\end{aligned}\]
where 
$T_{i(l_1 ,\ldots , \ l_{\lambda'_1})}$ is obtained from $T_i$ by
replacing the $l_k$ leftmost occurrences of $q$ in $i$-th row of
$T_i$ by $p$ for $1\leq k\leq\lambda'_1$.

Since \[T\{i:j\}\binom{_{qq}D}{k}=\binom{cont(j,q)}{k}T\{i:j\}\]
and \[\binom{D_{qq}}{k}T\{i:j\}=\binom{cont(i,q)}{k}T\{i:j\},\] where $cont(j,q)$ and $cont(i,q)$ denote the number of appearances of $q$ in $j$ and $i$, respectively, the claim follows.
\end{proof}

\subsection{$S$-superbimodule filtration of $A_{\lambda,K}$ for $char K=p>2$}

Once we have the $\mathbb{Z}$-form $A_{\lambda,\mathbb{Z}}$, we can apply the modular reduction and define
$A_{\lambda,K}=A_{\lambda,\mathbb{Z}}\otimes K$ for a ground field $K$ of characteristic $p>2$.

The left supermodule $D_{\lambda}$ has a $\mathbb{Z}$-form $D_{\lambda, \mathbb{Z}}$ consisting of modified symmetrizers $T\{\ell:j\}$. Analogously, the right supermodules $D^o_{\lambda}$ has a $\mathbb{Z}$-form $D^o_{\lambda, \mathbb{Z}}$ consisting of modified symmetrizers $T\{i:\ell\}$.
Applying modular reduction, we get $D_{\lambda,K}=D_{\lambda,\mathbb{Z}}\otimes K$ and 
$D^o_{\lambda,K}=K\otimes D^o_{\lambda,\mathbb{Z}}$.

\begin{lm}
The $S$-superbimodule $A_{\lambda,K}$ has a filtration by $S$-superbi-modules $L_S(\kappa)\otimes L^o_S(\mu)$, 
where $L_S(\kappa)$ is the irreducible left $S$-supermodule of the highest weight $\kappa$ linked to $\lambda$, 
and $L^o_S(\mu)$ is the irreducible right $S$-supermodule of the highest weight $\mu$ linked to $\lambda$. 
\end{lm}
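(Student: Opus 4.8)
The plan is to deduce the filtration from the $\mathbb{Q}$-coefficient structure established in Section~4 together with the modular reduction machinery set up at the start of Section~5. First I would recall from Theorem~\ref{t4.1} that over $\mathbb{Q}$ we have the $S_{\mathbb{Q}}$-superbimodule isomorphism $A_{\lambda,\mathbb{Q}}\simeq D_{\lambda,\mathbb{Q}}\otimes_{\mathbb{Q}} D^o_{\lambda,\mathbb{Q}}$, and that under this identification $D_{\lambda,\mathbb{Z}}$ (spanned by the $T\{\ell:j\}$ for $T_j$ semistandard) and $D^o_{\lambda,\mathbb{Z}}$ (spanned by the $T\{i:\ell\}$ for $T_i$ semistandard) are $\mathbb{Z}$-forms of the left and right irreducibles. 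The key point is that the $\mathbb{Z}$-span $A_{\lambda,\mathbb{Z}}$ of all modified symmetrizers $T\{i:j\}$ coincides with $D_{\lambda,\mathbb{Z}}\otimes_{\mathbb{Z}} D^o_{\lambda,\mathbb{Z}}$ as a $\mathbb{Z}$-form; this follows because, by Theorem~\ref{t5.2}, every $T\{i:j\}$ is a $\mathbb{Z}$-linear combination of the semistandard $T\{i':j'\}$, and conversely the left action of $S_{\mathbb{Z}}$ on the $T\{\ell:j\}$ together with the right action on the $T\{i:\ell\}$ generates exactly the tensor product of the two $\mathbb{Z}$-forms inside $A_{\lambda,\mathbb{Q}}$. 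Tensoring with $K$ then yields $A_{\lambda,K}\simeq D_{\lambda,K}\otimes_K D^o_{\lambda,K}$ as $S_K$-superbimodules.

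Next I would filter each tensor factor separately. The left $S_K$-supermodule $D_{\lambda,K}=D_{\lambda,\mathbb{Z}}\otimes K$ is obtained by modular reduction of the characteristic-zero irreducible, so it has a composition series whose factors are irreducible left $S$-supermodules $L_S(\kappa)$; by the general theory of highest weight categories for $S(m|n,r)$ (the linkage principle, see the references cited), each such $\kappa$ is linked to $\lambda$, and $L_S(\lambda)$ occurs with multiplicity one at the top. Likewise $D^o_{\lambda,K}$ has a composition series with factors $L^o_S(\mu)$, $\mu$ linked to $\lambda$. Choosing a composition series $0=M_0\subset M_1\subset\cdots\subset M_a=D_{\lambda,K}$ of left supermodules with $M_s/M_{s-1}\simeq L_S(\kappa_s)$, and similarly $0=N_0\subset\cdots\subset N_b=D^o_{\lambda,K}$ with $N_t/N_{t-1}\simeq L^o_S(\mu_t)$, the external tensor products $M_s\otimes N_t$ give a bifiltration of $A_{\lambda,K}$ whose successive quotients are the $S$-superbimodules $L_S(\kappa_s)\otimes L^o_S(\mu_t)$. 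Concatenating this bifiltration into a single chain (refining first in the left variable, then in the right) produces the asserted filtration.

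I expect the main obstacle to be the precise bookkeeping needed to identify $A_{\lambda,\mathbb{Z}}$ with $D_{\lambda,\mathbb{Z}}\otimes_{\mathbb{Z}} D^o_{\lambda,\mathbb{Z}}$ as $\mathbb{Z}$-forms — that is, checking that no denominators are lost when one passes from the $\mathbb{Q}$-isomorphism of Theorem~\ref{t4.1} to an integral statement. This requires Theorem~\ref{t5.2} (integral straightening of modified symmetrizers) to guarantee that the semistandard $T\{i:j\}$ actually span $A_{\lambda,\mathbb{Z}}$ over $\mathbb{Z}$, so that $A_{\lambda,\mathbb{Z}}$ is free with the expected rank $\dim D_\lambda\cdot\dim D^o_\lambda$ and the natural map $D_{\lambda,\mathbb{Z}}\otimes_{\mathbb{Z}} D^o_{\lambda,\mathbb{Z}}\to A_{\lambda,\mathbb{Z}}$, $T\{\ell:j\}\otimes T\{i:\ell\}\mapsto T\{i:j\}$, is an isomorphism of $\mathbb{Z}$-modules compatible with both one-sided actions. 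A secondary point, which is routine but must be stated, is that the invariance noted after Definition~\ref{5} (so that the coefficients of $T\{i:j\}$ are integers) is exactly what makes $T\{i:j\}$ lie in $D_{\lambda,\mathbb{Z}}\otimes_{\mathbb{Z}} D^o_{\lambda,\mathbb{Z}}$ rather than merely in its rationalization. Once the $\mathbb{Z}$-form identification is in hand, the filtration itself is a formal consequence of exactness of $-\otimes_{\mathbb{Z}} K$ on free modules and the linkage principle.
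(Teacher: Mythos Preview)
Your approach is essentially the paper's: take composition series of the one-sided reductions $D_{\lambda,K}$ and $D^o_{\lambda,K}$, then form the tensor-product bifiltration of $A_{\lambda,K}$. The paper's proof does exactly this in three sentences, simply asserting the final step without justifying why the filtrations of $D_{\lambda,K}$ and $D^o_{\lambda,K}$ induce one on $A_{\lambda,K}$; you are more explicit in supplying the missing identification $A_{\lambda,K}\simeq D_{\lambda,K}\otimes_K D^o_{\lambda,K}$ via the integral tensor-product description $A_{\lambda,\mathbb{Z}}\simeq D_{\lambda,\mathbb{Z}}\otimes_{\mathbb{Z}} D^o_{\lambda,\mathbb{Z}}$.

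Two small remarks. First, your argument for that integral identification invokes Theorem~\ref{t5.2}, which in the paper's order comes \emph{after} this lemma; there is no circularity (Theorem~\ref{t5.2} uses only the material of Section~3), but be aware of the forward reference. The paper itself avoids this by leaving the identification tacit, effectively relying on the general modular-reduction principle stated at the start of Section~5 (independence of composition multiplicities from the choice of $\mathbb{Z}$-form) rather than on an explicit integral isomorphism. Second, your appeal to ``the general theory of highest weight categories for $S(m|n,r)$'' is slightly off: as the introduction notes, $S(m|n,r)$ is \emph{not} quasi-hereditary in positive characteristic. The linkage claim you need is the weaker block-decomposition statement---all composition factors of $D_{\lambda,K}$ lie in the block containing $L_S(\lambda)$---which is what the paper means by ``linked to $\lambda$'' and justifies simply by observing that $L_S(\lambda)$ itself occurs.
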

\begin{proof}
The left $S$-supermodule $D_{\lambda,K}$ has a filtration by left $S$-super-modules
\[0\subset N_1 \subset N_2 \subset \ldots \subset N_s=D_{\lambda,K}\]
where $N_1=L_S(\mu_1)$, and 
$N_j/N_{j-1}\simeq L_S(\mu_j)$ for $j=2, \ldots, s$ are irreducible left $S$-modules of highest weight $\mu_j$  linked to $\lambda$, since one of the weights $\mu_j$ equals $\lambda$.

Analogously, the right $S$-supermodule $D^o_{\lambda,K}$ has a filtration by right $S$-supermodules
\[0\subset M_1 \subset M_2 \subset \ldots \subset M_t=D^o_{\lambda,K}\]
where $M_1=L^o_S(\kappa_1)$ and 
$M_i/M_{i-1}\simeq L^o_S(\kappa_i)$ for $i=2, \ldots, t$ are irreducible right $S$-modules of highest weight $\kappa_i$ linked to $\lambda$, since one of the weights $\kappa_i$ equals $\lambda$.

Then $A_{\lambda, K}$ has a filtration such that each of its $st$ factors is a  $S$-superbimodule $L_S(\kappa_i)\otimes L^o_S(\mu_j)$ for 
$i=1, \ldots, t$ and $j=1, \ldots, s$. 
\end{proof}

\subsection{Straightening of modified symmetrizers}

Motivated by Theorem \ref{t5.1},  it is natural to ask if modified symmetrizers are $\mathbb{Z}$-linear combinations of modified symmetrizers for $T_i,T_j$ semistandard. The anwer is affirmative.
\begin{tr}\label{t5.2} 
Let $\lambda$ be an $(m|n)$-hook partition. Then every modified symmetrizer $T^{\lambda}\{i:j\}$ is a $\mathbb{Z}$-linear combination of modified symmetrizers $T^{\lambda}\{u:v\}$ for $T_u, T_v$ semistandard.
\end{tr}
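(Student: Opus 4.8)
The natural approach is to reduce the statement about modified symmetrizers over $\mathbb{Z}$ to the already-established straightening over $\mathbb{Q}$ (Theorem \ref{t3.1}), and then control denominators by combining the triangularity and non-vanishing results from Section 4 with the modular-reduction machinery from Theorem \ref{t5.1}. First I would fix the content $\mu$ of $T_i$ and of $T_j$ and work inside the finite-dimensional $\mathbb{Q}$-space $A_{\lambda,\mathbb{Q}}^{(\mu,\nu)}$ spanned by symmetrizers of those contents; by Theorem \ref{t3.1} the semistandard $T^{\lambda}[u:v]$ span this space, and by Proposition \ref{p4.3} they are linearly independent, so they form a $\mathbb{Q}$-basis. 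Hence there is a \emph{unique} expansion
\[
T^{\lambda}\{i:j\}=\sum_{T_u,T_v\ \mathrm{semistandard}} a_{uv}\,T^{\lambda}\{u:v\},\qquad a_{uv}\in\mathbb{Q},
\]
and the whole problem is to show every $a_{uv}\in\mathbb{Z}$.

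To extract the coefficients $a_{uv}$ I would apply Capelli operators. Order the semistandard tableaux by the linear order $\leq$ from the proof of Proposition \ref{p4.3} (refining $\unlhd_c$ in the first slot and refining $\unlhd_r^{\mathrm{op}}$ in the second). Proposition \ref{p4.2} gives the triangularity $C(T_u,T_v)T^{\lambda}[x:y]\neq 0\Rightarrow T_u\unlhd_c T_x,\ T_v\unlhd_r T_y$, and Proposition \ref{p4.1} gives $C(T_u,T_v)T^{\lambda}[u:v]=T^{\lambda}[\overline{\ell}(\lambda):\underline{\ell}(\lambda)]$, a single nonzero basis vector independent of $(u,v)$. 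So applying $C(T_u,T_v)$ to the displayed relation and picking off the coefficient of $T^{\lambda}[\overline{\ell}(\lambda):\underline{\ell}(\lambda)]$ isolates $a_{uv}$ as a $\mathbb{Z}$-linear combination (from the triangular system) of the analogous coefficients for smaller semistandard pairs, plus the contribution of $T^{\lambda}\{i:j\}$ itself. The key point is that every Capelli operator $C(T_k,T_l)$ is a composite of divided-power-type polarization operators $D^{(r)}$, each of which is, up to the normalizing factors already built into the modified symmetrizers, an element of the Kostant-type $\mathbb{Z}$-form of $\mathrm{Dist}(\mathrm{GL}(m|n))$ acting on the $\mathbb{Z}$-form; indeed this is precisely the content of the formulae in Theorem \ref{t5.1} and Lemma \ref{l42}. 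Therefore $C(T_u,T_v)$ maps $A_{\lambda,\mathbb{Z}}$ into $A_{\lambda,\mathbb{Z}}$, and in particular sends modified symmetrizers to $\mathbb{Z}$-combinations of modified symmetrizers.

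With that in hand the proof becomes a descending induction on the pair $(T_i,T_j)$ in the order $\leq$ (extended from semistandard tableaux to all column/row-ordered tableaux via the reductions of Propositions \ref{p3.1} and \ref{p3.2}, which already express any $T^{\lambda}\{i:j\}$ as a $\mathbb{Z}$-combination of modified symmetrizers with strictly $\unlhd_c$- and $\unlhd_r$-smaller, hence $\leq$-earlier, tableaux — the integrality of those Garnir/Carter--Lusztig coefficients after the $r(T_i)c(T_j)$-normalization being exactly what the remark following the definition of $T\{i:j\}$ and the computation in Proposition \ref{p1} establish). Concretely: if $T_i$ or $T_j$ fails to be column/row-injective the symmetrizer is $0$; otherwise the reductions of Section 3, once renormalized, rewrite $T^{\lambda}\{i:j\}$ with integer coefficients in terms of strictly earlier modified symmetrizers, and we invoke the induction hypothesis. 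The base case is $T_i,T_j$ both semistandard, where there is nothing to prove. The main obstacle, and the step I would write out most carefully, is verifying that the coefficients produced by the Garnir relations (Lemma \ref{l2}, Lemma \ref{l4}) and the Carter--Lusztig straightening — which a priori involve the factors $\binom{r-p+2}{r-q+1}^{-1}$ appearing in Proposition \ref{p1} — become integral after dividing by $r(T_i)c(T_j)$; this is a bookkeeping argument matching the multinomial denominators against the factorials $m_{ir}(i)!$ and $n_{id}(j)!$ absorbed into the modified normalization, entirely parallel to the proof of Theorem 2 of \cite{mz}.
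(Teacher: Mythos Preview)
Your Capelli-operator route has a concrete gap: the operators $C(T_u,T_v)$ do \emph{not} preserve the $\mathbb{Z}$-span of modified symmetrizers. By Proposition~\ref{p4.1} one has $C(T_u,T_v)T[u:v]=T[\overline{\ell}(\lambda):\underline{\ell}(\lambda)]$, and since $T_{\overline{\ell}(\lambda)}$ contains no even symbols and $T_{\underline{\ell}(\lambda)}$ no odd ones, any reasonable extension of the normalisation gives $T[\overline{\ell}(\lambda):\underline{\ell}(\lambda)]=T\{\overline{\ell}(\lambda):\underline{\ell}(\lambda)\}$. Hence $C(T_u,T_v)T\{u:v\}=\frac{1}{r(T_u)c(T_v)}\,T\{\overline{\ell}(\lambda):\underline{\ell}(\lambda)\}$, which is non-integral as soon as $T_u$ repeats an even entry in a row or $T_v$ repeats an odd entry in a column. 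The underlying issue is that $D_L^r(\overline{j},i)$ with $i$ even and $r\geq 2$ is not a divided power in the Kostant $\mathbb{Z}$-form: the root vector $e_{\overline{j},i}$ is odd and squares to zero in $U(\mathfrak g)$, so Theorem~\ref{t5.1} and Lemma~\ref{l42} (which concern the uncolored alphabet and divided powers of even $e_{pq}$) do not apply. Your triangular system therefore has non-unit diagonal and cannot yield integrality of the $a_{uv}$.

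Your final paragraph --- redo the straightening of Propositions~\ref{p3.1} and \ref{p3.2} and verify that after the $r(T_i)c(T_j)$ division the Garnir coefficients become integers --- is exactly the paper's proof, and is where all the content lies; your parenthetical that this ``is exactly what the remark following the definition of $T\{i:j\}$ and the computation in Proposition~\ref{p1} establish'' is wrong, since that remark only gives integrality of the $\chi$-expansion, not of the straightening coefficients. One point you do not anticipate: in the case of a repeated odd entry $j_{a_q}=j_{b_q}$, the paper does not reuse the sets $X,Y$ from Proposition~\ref{p1} but enlarges $Y$ to include \emph{all} positions in column $d+1$ carrying that odd symbol. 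Only with this enlarged $Y$ does the count of transversal elements $\sigma_t$ with a given $j\sigma_t=\tilde j$ match the ratio of the factors $c(T_j)$ and $c(T_{\tilde j})$, yielding the clean relation $T\{i:j\}=-\sum_{\tilde j\neq j}\epsilon_{\tilde j}\,T\{i:\tilde j\}$ with coefficients $\pm 1$; the naive bookkeeping against $\binom{r-p+2}{r-q+1}^{-1}$ from Proposition~\ref{p1} does not close.
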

\begin{proof}
We need to review and modify the proof of Proposition \ref{p1}. We adhere to the notation of the proof of Proposition \ref{p1}.
First consider the case $\underline{j_{a_q}>j_{b_q}}$:

We have 
\begin{equation}\label{3}
T[i:j]=-\sum_{t=2}^l sgn(\sigma_t)T[i:j*\sigma_t],
\end{equation}
where $T_{j\sigma_t}<T_j$ for each $t>1$.

We group all symmetrizers $T[i:j*\sigma_t]$ with the same $\tilde{j}=j\sigma_t$. Since $\sigma_t$ in the same group only differ by a permutation of identical odd elements, we obtain $j*\sigma_t=sgn(\sigma_t)j$.

Recall that $n_{kd}(j)$ is the number of
occurences of the symbol $m+1\leq k\leq m+n $ in the $d$-th column of $T_j$.
Since each $\sigma_t$ only permutes elements in the $d$ and $d+1$st column of $T_j$, we need to be concerned only 
with $n_{kd}(\tilde{j})$ and $n_{k,d+1}(\tilde{j})$ for $m+1\leq k\leq m+n$.

Clearly, $n_{k,d}(\tilde{j})+n_{k,d+1}(\tilde{j})= n_{k,d}(j)+n_{k,d+1}(j)$ for each $k$. There are
\[\prod_{k=m+1}^{m+n}\binom{n_{k,d}(j)+n_{k,d+1}(j)}{n_{k,d}(\tilde{j})}\]
elements $\sigma_t$ such that $j\sigma_t=\tilde{j}$.
Then
\[\begin{aligned}&\prod_{k=m+1}^{m+n}n_{k,d}(\tilde{j})!n_{k,d+1}(\tilde{j})!\binom{n_{k,d}(j)+n_{k,d+1}(j)}{n_{k,d}(\tilde{j})}\\
&=\prod_{k=m+1}^{m+n} (n_{k,d}(j)+n_{k,d+1}(j))!=\prod_{k=m+1}^{m+n}n_{k,d}(j)!n_{k,d+1}(j)!
\end{aligned}\]
because at least one of $n_{k,d}(j)$ and $n_{k,d+1}(j)$ vanish.

Therefore, when we replace symmetrizers in (\ref{3}) by modified symmetrizers, and divide by $r(T_i)c(T_j)$, we obtain
\[T\{i:j\}=-\sum_{\tilde{j}\neq j} sgn(\sigma_t)T\{i:\tilde{j}\},\]
where $T_{\tilde{j}}<T_j$.

Case \underline{$j_{a_q}=j_{b_q}>m$:}

We need to modify the notation as follows:
\[ T_j:
\begin{array}{ccc}
C_d&&C_{d+1}\\
&&\\
j_{a_1}&&j_{b_1}\\
\ldots&&\ldots\\
\ldots&&j_{b_{p-1}}\\
&&\wedge\\
\ldots&&j_{b_p}\\
&&\vert\vert\\
\ldots&&\ldots\\
&&\vert\vert\\
j_{a_q}&=&j_{b_q}\\
\vert\vert&&\vert\vert\\
\ldots&&\ldots\\
\vert\vert&&\\
j_{a_r}&&\vert\vert\\
\wedge&&j_{b_t}\\
j_{a_{r+1}}&&\wedge\\
\ldots&&j_{b_{t+1}}\\
\ldots&&\ldots\\
j_{a_s}&&\\
\end{array},
\]
where there is no relation between indices $r$ and $t$.

Define $X=\{a_s, \ldots, a_q\}$ and $Y=\{b_t, \ldots, b_1\}$. By Lemma \ref{l2}
we obtain 
\begin{equation}\label{4}\sum_{t=1}^l sgn(\sigma_t)T[i:j*\sigma_t]=0.
\end{equation}

Again we group together all symmetrizers $T]i:j*\sigma_t]$ with the same $\tilde{j}=j\sigma_t$. Since $\sigma_t$ in the same group only differ by a permutation of identical odd elements, we get all corresponding expressions $sgn(\sigma_t)j*\sigma_t=\epsilon_{\tilde{j}}\tilde{j}$, where $\epsilon_{\tilde{j}}=\pm 1$.

Again, $n_{k,d}(\tilde{j})+n_{k,d+1}(\tilde{j})= n_{k,d}(j)+n_{k,d+1}(j)$ for each $k$. There are
\[\prod_{k=m+1}^{m+n}\binom{n_{k,d}(j)+n_{k,d+1}(j)}{n_{k,d}(\tilde{j})}\]
elements $\sigma_t$ such that $j\sigma_t=\tilde{j}$.
Then
\[\begin{aligned}&\prod_{k=m+1}^{m+n}n_{k,d}(\tilde{j})!n_{k,d+1}(\tilde{j})!\binom{n_{k,d}(j)+n_{k,d+1}(j)}{n_{k,d}(\tilde{j})}\\
&=\prod_{k=m+1}^{m+n} (n_{k,d}(j)+n_{k,d+1}(j))!\\
&\prod_{k=m+1}^{m+n}n_{k,d}(j)!n_{k,d+1}(j)!\binom{n_{k,d}(j)+n_{k,d+1}(j)}{n_{k,d}(j)},
\end{aligned}\]

Therefore, when we replace symmetrizers in (\ref{4}) by modified symmetrizers, and divide by 
\[r(T_i)c(T_j)\prod_{k=m+1}^{m+n}\binom{n_{k,d}(j)+n_{k,d+1}(j)}{n_{k,d}(j)},\]
we obtain
\[T\{i:j\}=-\sum_{\tilde{j}\neq j} \epsilon_{\tilde{j}} T\{i:\tilde{j}\},\]
where $T_{\tilde{j}}<T_j$.

The modification to Proposition \ref{p2} are analogous.
\end{proof}

The previous theorem generalizes Proposition 4.9 of \cite{mz}. 

\subsection{Connection of $A_\lambda$ to factors of Donkin-Koppinen filtration of $K[G]$}

Using a process of modular reduction and Theorems \ref{t5.1} and \ref{t5.2}, we obtain that over a field of characteristic $p>2$, the superbimodule $A_{\lambda, K}$ has a basis consisting of modified symmetrizers $T^\lambda \{i:j\}$. 
This result is related to our previous work \cite{mz}. 

Since the modular reduction preserves the character, we obtain that, for polynomial weights $\lambda$, the
characters of $A_{\lambda}$ and $D_{\lambda}\otimes D_{\lambda}^o$ coincide.

From the left regular representation of $S(1|1,r)$ for $r$ divisible by the characteristic $p$ given in \cite{mz-1},
we get
\[A(1|1,r)_S= \begin{array}{cccccccccccc}1&&&1&&&&&r-1&&&r-1\\
0&\oplus&0&&2&\oplus&\ldots&r-2&&r&\oplus&r\\
&&&1&&&&&r-1&&&
\end{array}\]
We observe that for all weights $\lambda\neq 1^r$ (those include all strong polynomial weights) we have 
$A_{\lambda}\simeq \nabla_S(\lambda)\otimes \Delta_S(\lambda)^*$, while for $\lambda=1^r$ which is not strong polynomial, we have 
\[A_{1^r}\simeq \nabla_S(1^r)^*\oplus \Delta_S(1^r)^*\simeq \begin{array}{ccc}1&&0\\0&\oplus&1\end{array}\not \simeq \begin{array}{ccc}0&&0\\1&\oplus&1\end{array}\simeq \nabla_S(1^r)\otimes \Delta_S(1^r)^* \]
as right $S$-supermodules.

Motivated by this example we ask the following question:
\begin{qs}\label{question}
For which $(m|n)$-hook partitions $\lambda$ is there an isomorphism
$A_{\lambda,K} \simeq  \nabla_S(\lambda)\otimes \Delta_S(\lambda)^*$ of $S$-superbimodules? 
\end{qs}

If $\lambda_m\geq n$, then the largest polynomial subsupermodule $\nabla(\lambda)$ of $H^0_G(\lambda)$ is $H^0_G(\lambda)$, the largest polynomial factorsupermodule $\Delta(\lambda)$ of the Weyl supermodule $V_G(\lambda)$ is $V_G(\lambda)$, and
$\nabla(\lambda)\otimes \Delta(\lambda)^*  \simeq H^0_G(\lambda)\otimes V_G(\lambda)^*$.
Therefore, if $\lambda$ satisfies the conditions in Question \ref{question} and $\lambda_m\geq n$, 
then $A_{\lambda, K}$ describes the $G$-superbimodule structure of the factor of Donkin-Koppinen filtration corresponding to such $\lambda$.

Of course, tensoring with a power of the Berezian $Ber$ of weight $\nu=(n, \ldots, n|-m,\ldots , -m)$ extends the description of the $G$-superbimodule structure for factors of Donkin-Koppinen filtration corresponding to dominant $\lambda$ such that $\mu=\lambda + k\nu$ satisfies the conditions in Question \ref{question}.

\end{document}